\documentclass{article}

\usepackage{arxiv}

\usepackage[utf8]{inputenc} % allow utf-8 input
\usepackage[T1]{fontenc}    % use 8-bit T1 fonts
\usepackage{hyperref}       % hyperlinks
\usepackage{url}            % simple URL typesetting
\usepackage{booktabs}       % professional-quality tables
\usepackage{amsmath}
\usepackage{amssymb}
\usepackage{microtype}      % microtypography
\usepackage{graphicx}
\usepackage{natbib}
\usepackage{bm}
\usepackage{enumitem}

\title{Asymptotic theory and first-order bias of the Wallace--Freeman estimator}

\author{ \href{https://orcid.org/0000-0003-3017-0871}{\includegraphics[scale=0.06]{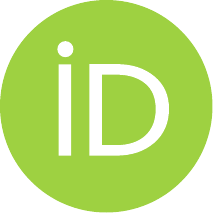}\hspace{1mm}Enes Makalic}%\thanks{Use footnote for providing further
    \\
    Faculty of Information Technology\\
	Monash University\\
	Clayton, VIC 3800 \\
	\texttt{enes.makalic@monash.edu} \\
	\And
	\href{https://orcid.org/0000-0002-1788-2375}{\includegraphics[scale=0.06]{orcid.pdf}\hspace{1mm}Daniel F.~Schmidt} \\
    Faculty of Information Technology\\
	Monash University\\
	Clayton, VIC 3800 \\
	\texttt{daniel.schmidt@monash.edu} \\
}

\DeclareMathOperator*{\argmin}{arg\,min}
\DeclareMathOperator*{\argmax}{arg\,max}

\newcommand{\btheta}{\bm{\theta}}
\newcommand{\bvartheta}{\bm{\vartheta}}

\newcommand{\bu}{\mathbf{u}}
\newcommand{\bX}{\mathbf{X}}
\newcommand{\bx}{\mathbf{x}}

\newtheorem{theorem}{Theorem}[section]

\newtheorem{corollary}[theorem]{Corollary}

\newtheorem{remark}[theorem]{Remark}
\newtheorem{proposition}[theorem]{Proposition}

\newenvironment{proof}[1][Proof]{\par\noindent\textbf{#1.} }{\hfill$\square$\par}

\newcounter{assump}

\renewcommand{\shorttitle}{Asymptotic theory for the Wallace--Freeman estimator}

\hypersetup{
pdftitle={Asymptotic theory and first-order bias of the Wallace--Freeman estimator},
pdfsubject={Statistics methodology and theory},
pdfauthor={Enes Makalic, Daniel F. Schmidt},
pdfkeywords={Penalised likelihood, M-estimation, Minimum message length, Maximum likelihood, Cox--Snell bias, Asymptotic normality, Weibull distribution},
}

\begin{document}
\maketitle

\begin{abstract}
The Wallace--Freeman estimator is a classical minimum message length estimator whose relationship with likelihood-based asymptotic theory has not been fully developed. We show that, in regular parametric models, the Wallace--Freeman criterion is equivalent, up to constants, to a penalised likelihood criterion with penalty weight \(n^{-1}\). This representation places the estimator within the standard theory of penalised M-estimation and yields existence, consistency, an asymptotic linear expansion, and asymptotic normality under regularity conditions. We further derive the first-order difference between the Wallace--Freeman estimator and the maximum likelihood estimator, showing that it is an explicit \(O(n^{-1})\) shift determined by the gradient of the Wallace--Freeman penalty. Combining this expansion with the Cox--Snell formula gives a first-order bias expansion for the Wallace--Freeman estimator. The result clarifies its relationship with maximum likelihood, Jeffreys-prior penalisation, and Firth-type bias reduction. We illustrate the theory for the Weibull model, where the penalty modifies the leading bias of the maximum likelihood estimator of the shape parameter.
\end{abstract}

% keywords can be removed
\keywords{Penalised likelihood \and M-estimation \and Minimum message length \and Maximum likelihood \and Cox--Snell bias \and Asymptotic normality \and Weibull distribution}
\section{Introduction}
\label{sec:intro}
The Wallace--Freeman minimum message length estimator~\cite{WallaceFreeman87,Wallace05} is a classical point estimator derived from the minimum message length principle~\cite{WallaceBoulton68,WallaceBoulton75,WallaceDowe99a,WallaceDowe99c,Wallace05}. Although the estimator has traditionally been motivated by information-theoretic coding arguments, its connection with standard likelihood theory, and in particular with penalised estimation and higher-order bias expansions, has not been systematically developed. This paper shows that the Wallace--Freeman estimator can be written as a penalised M-estimator with penalty weight \(n^{-1}\). This representation provides a direct route to consistency, asymptotic normality, and an explicit \(O(n^{-1})\) bias expansion that extends the Cox--Snell bias formula for maximum likelihood estimators~\cite{CoxSnell68,CordeiroKlein94}.

The Wallace--Freeman criterion, up to constants independent of the parameters $\btheta \in \Theta \subseteq \mathbb{R}^d$, is
\[
-\log p_n(\bx \mid \btheta)
-\log\pi(\btheta)
+\frac12\log |I(\btheta)|,
\]
where $p_n(\bx \mid \btheta)$ denotes the likelihood of $n$ data points $\bx \in \mathcal{X}^n$, $I(\btheta)$ is the per-sample Fisher information matrix, and $\pi(\btheta)$ is a prior density on $\Theta$. Equivalently, we can view the criterion as the empirical risk penalised by
\[
n^{-1}\left\{-\log\pi(\btheta)+\frac12\log |I(\btheta)|\right\},
\]
where the penalty vanishes at rate \(n^{-1}\). Consequently, the Wallace--Freeman estimator has the same first-order limiting distribution as the maximum likelihood estimator, but differs from it at order \(n^{-1}\) by a deterministic term involving the gradient of the Wallace--Freeman penalty. This term yields a simple modification of the Cox--Snell first-order bias expansion.

The contributions of the paper are threefold. First, we derive the Wallace--Freeman codelength~\cite{WallaceFreeman87,Wallace05} from a local high-rate approximation to strict minimum message length~\cite{WallaceBoulton75,Wallace05,MakalicSchmidt26b}. Second, we formulate the resulting estimator as a penalised M-estimator and establish existence, consistency, an asymptotic linear representation, and asymptotic normality under standard regularity conditions. Third, we derive the first-order difference between the Wallace--Freeman and maximum likelihood estimators, yielding an explicit penalty-driven correction to the \(O(n^{-1})\) maximum likelihood bias.

As an illustration of the general theory, we study the Wallace--Freeman estimator for the Weibull model. This example is useful because the maximum likelihood estimator of the Weibull shape parameter has a non-negligible first-order bias. The proposed expansion gives an explicit
Wallace--Freeman correction term, showing how the prior contribution and the Fisher information term modify the Cox--Snell bias. 

The remainder of the paper is organised as follows. Section~\ref{sec:mml} reviews minimum message length inference, strict minimum message length, and the Wallace--Freeman approximation. Section~\ref{sec:wf} derives the Wallace--Freeman codelength from a local high-rate SMML expansion and formulates the estimator as a penalised M-estimator. Section~\ref{sec:M:estimator} establishes its large-sample properties. Section~\ref{sec:bias} derives the
first-order bias expansion and its relation to the Cox--Snell formula. The theory is illustrated for the Weibull distribution in Section~\ref{sec:example}, with discussion and future work given in Section~\ref{sec:discussion}.
\section{Minimum message length and the Wallace--Freeman approximation}
\label{sec:mml}
Minimum message length (MML) \cite{WallaceBoulton68,WallaceBoulton75,WallaceFreeman87,Wallace05} is a coding approach to parametric inference and model selection. The key idea is to cast parameter estimation and model selection in the framework of data compression. Given data \(\bx \in \mathcal X^n\) and  parameters \(\btheta\in\Theta\subset\mathbb R^d\), a  MML message describing the data consists of two parts: (1)~the assertion of length $\mathcal{I}(\btheta)$, which encodes the parameters, and (2)~the detail of length $\mathcal{I}(\bx \mid \btheta)$, which encodes the data conditional on the asserted parameters. Therefore the MML codelength has the form
\begin{align}
\mathcal{I}(\bx,\btheta)
=
\mathcal{I}(\btheta)+\mathcal{I}(\bx \mid \btheta).
\end{align}
This decomposition gives the form of a two-part message, but it does
not by itself specify a finite code. In particular, a complete MML code must determine which parameter assertions are available and which data sets are encoded using each assertion. Strict minimum message length (SMML)~\cite{WallaceBoulton75,Wallace05,MakalicSchmidt26b} makes this construction global by partitioning the sample space. Each cell of the partition is assigned an assertion probability and a representative parameter value, and all data sets falling in that cell are encoded using the same representative. Thus, whereas many practical MML estimators arise by minimising an approximate pointwise codelength over \(\btheta\), SMML is defined as an optimisation over finite codebooks induced by measurable partitions of the sample space. We now describe this construction formally.

Let \(\Pi\) denote a prescribed class of admissible measurable partitions of \(\mathcal X^n\). For \(\mathcal P\in\Pi\), write
$
\mathcal P=\{P_1,\ldots,P_k\},
$
where \(k\) is finite.
Let $\pi(\btheta)$ denote a prior density on $\Theta$, and define the marginal distribution
\begin{align*}
    r_n(\bx)=\int_\Theta p_n(\bx \mid \btheta)\pi(\btheta)\,d\btheta.
\end{align*}
For a cell \(P_j \in \mathcal{P}\), write
\begin{align}
    q_j=r_n(P_j) = \sum_{\bx\in P_j} r_n(\bx) > 0, \qquad (1 \leq j \leq k),
    \label{eqn:qj}
\end{align}
with sums interpreted as integrals in the continuous case.  If data falling in cell \(P_j\) are encoded using the representative assertion \(\btheta_j\), then the expected two-part SMML codelength~\cite{WallaceBoulton75,Wallace05,MakalicSchmidt26b} is
\begin{align}
\mathcal{I}(\mathcal P, \btheta_1, \ldots, \btheta_k)
=
-\sum_{j=1}^k q_j\log q_j
-\sum_{j=1}^k \sum_{\bx\in P_j} r_n(\bx)\log p_n(\bx \mid \btheta_j) .
\label{eqn:smml}
\end{align}
For a fixed partition \(\mathcal P\), the optimal representative in cell \(P_j\) is
\begin{align}
\btheta_j^*
=
\argmax_{\btheta\in\Theta}
\sum_{\bx\in P_j} r_n(\bx)\log p_n(\bx \mid \btheta).
\label{eqn:smml:estimate}
\end{align}
The optimal SMML solution is obtained by minimising
$\mathcal{I}(\mathcal P, \btheta_1, \ldots, \btheta_k)$
over all measurable partitions \(\mathcal P \in \Pi\) of the sample space.
In general, computing the optimal SMML partition is intractable except in special low-dimensional cases \cite{FarrWallace02,Dowty13}. The Wallace--Freeman approximation \cite{WallaceFreeman87} avoids this global partition problem by replacing it with a local high-rate approximation, which we formalise in the next section.

\section{Asymptotic SMML and the Wallace--Freeman estimator}
\label{sec:wf}
Consider an asymptotic coding regime in which the
number of SMML assertion cells is large and each cell has small probability. In this high rate regime, each SMML cell of the data-space partition may be associated locally with a small region in parameter space. The assertion codelength is governed by the prior mass of this region, whereas the increase in detail codelength from using a representative parameter is approximated by local Kullback--Leibler geometry, or equivalently by Fisher information curvature. 
%Under Fisher normalisation, this local regret becomes a quadratic quantisation problem. 
Optimising the resulting cell-volume term yields the Wallace--Freeman criterion~\cite{WallaceFreeman87,Wallace05}.

For completeness, we formalise this derivation for a regular sequence of high-rate SMML codebooks. The assumptions below encode local Fisher quadraticity, prior-mass localisation of cells, negligible representation error, and asymptotic quantisation optimality in Fisher-normalised coordinates.

\begin{theorem}[High-rate expected SMML expansion]
\label{thm:asmml-expected-simplified}
Let $\Theta\subseteq\mathbb R^d$ be open, and let
$P_{\btheta}^{(n)}$ denote the distribution of the sample
\[
\bX=(X_1,\ldots,X_n)
\]
under parameter $\btheta$. Write $p_n(\bx\mid\btheta)$ for the likelihood and let $J_n(\btheta)$ denote the expected Fisher information for the sample of size $n$. Let $K \Subset \Theta$ be compact. Suppose that \(\pi\) is a probability density on \(K\),
\[
\int_K \pi(\btheta)\,d\btheta=1,
\]
and that \(\pi\) is continuous and strictly positive on \(K\).
Define the prior predictive density
\[
r_n(\bx)
=
\int_K p_n(\bx\mid\btheta)\pi(\btheta)\,d\btheta .
\]
Consider a sequence of high-rate SMML codebooks $\mathcal C_n=(\mathcal P_n,q_n,\btheta_n)$, where
\[
\mathcal P_n=\{P_{n,1},\ldots,P_{n,k_n}\},
\qquad
q_n=\{q_{n,1},\ldots,q_{n,k_n}\},
\qquad
\btheta_n = \{ \btheta_{n,1},\ldots, \btheta_{n,k_n}\},
\]
with optimal assertion probabilities given by
\[
q_{n,j}=r_n(P_{n,j}) > 0, \qquad \forall n,j.
\]
For data $\bx\in P_{n,j}$, let $T_n(\bx)\in K$ denote the corresponding (unquantised) local parameter estimate, and let $\btheta_{n,j}\in K$ denote the quantised assertion used for all data in cell $P_{n,j} \in \mathcal P_n$. 

Assume the following conditions hold.

\begin{enumerate}[label=(A\arabic*)]

\item \label{ass:simplified:fisher}
%\textbf{Regular local Fisher geometry.}
The $n$-sample Fisher information $J_n(\btheta)$ is continuous and positive definite on $K$, and
\[
\inf_{\btheta\in K}
\lambda_{\min}\{J_n(\btheta)\}
\rightarrow \infty .
\]

\item \label{ass:simplified:kl}
For every bounded set $B\subset\mathbb R^d$,
\[
\sup_{\btheta\in K,\ \bu\in B}
\left|
D_{\mathrm{KL}}\!\left(
P_{\btheta+J_n(\btheta)^{-1/2}\bu}^{(n)}
\Vert
P_{\btheta}^{(n)}
\right)
-
\frac12\|\bu\|^2
\right|
\to 0,
\]
where $D_{\mathrm{KL}}(\cdot\Vert\cdot)$ denotes the Kullback--Leibler divergence~\cite{KullbackLeibler51}. 
%For each fixed bounded $B$, the point $\btheta+J_n(\btheta)^{-1/2}\bu$ belongs to $\Theta$ for all sufficiently large $n$, uniformly over $\btheta\in K$ and $\bu\in B$.

\item \label{ass:simplified:localisation}
Each data-space cell \(P_{n,j} \in \mathcal P_n\) induces a parameter-space region \(W_{n,j}\subset K\), interpreted as the set of unquantised local parameter values \(T_n(\bx)\) assigned to the assertion \(\btheta_{n,j}\). Assume that
\[
q_{n,j}
=
\Pi(W_{n,j})\exp(\alpha_{n,j}),
\qquad
\Pi(W)
=
\int_W\pi(\bvartheta)\,d\bvartheta,
\]
where
\[
\sum_j q_{n,j}|\alpha_{n,j}|\to0.
\]
Moreover,
\[
W_{n,j}
=
\left\{
\btheta_{n,j}
+
J_n(\btheta_{n,j})^{-1/2}\bu : \bu \in C_{n,j}
\right\},
\]
where \(C_{n,j}\subset\mathbb R^d\) is measurable, contains the origin, has positive volume
\[
V_{n,j}=\operatorname{Vol}(C_{n,j}),
\]
and the sets \(C_{n,j}\) have uniformly bounded diameter.

\item \label{ass:simplified:regret}
%\textbf{KL representation of the second-part regret.}
Let
\[
\Delta_{n,j}
=
\mathbb E_{r_n}
\left[
-\log p_n(\bX\mid\btheta_{n,j})
+
\log p_n\{\bX\mid T_n(\bX)\}
\,\middle|\,
\bX\in P_{n,j}
\right].
\]
Assume that
\[
\Delta_{n,j}
=
\frac{1}{\Pi(W_{n,j})}
\int_{W_{n,j}}
D_{\mathrm{KL}}\!\left(
P_{\bvartheta}^{(n)}
\Vert
P_{\btheta_{n,j}}^{(n)}
\right)
\pi(\bvartheta)\,d\bvartheta
+
\varepsilon_{n,j},
\]
where
\[
\sum_j q_{n,j}|\varepsilon_{n,j}|\longrightarrow 0 .
\]
%
%This assumption identifies the conditional increase in detail codelength with the prior-weighted KL regret of replacing the unquantised local parameter by the cell representative.

\item \label{ass:simplified:quantisation}
%\textbf{Asymptotic high-rate quantisation optimality.}
For a Fisher-normalised cell $C$, define $V(C) = {\rm Vol}(C)$,
\[
\Phi(C)
=
-\log V(C)
+
\frac{1}{2V(C)}
\int_C \|\mathbf u\|^2\,d\mathbf u ,
\]
and its scale-invariant normalised second moment by
\[
\kappa(C)
=
\frac{1}{d V(C)^{1+2/d}}
\int_C \|\mathbf u\|^2\,d\mathbf u .
\]
For $a>0$, let $a C=\{a\mathbf u:\mathbf u\in C\}$. Since $\kappa(aC)=\kappa(C)$, minimising $\Phi(aC)$ over $a>0$ gives
\[
\inf_{a>0}\Phi(aC)
=
\frac d2\log \kappa(C)
+
\frac d2 .
\]
Let
\[
\kappa_d
=
\inf_C \kappa(C)
\]
be the optimal $d$-dimensional normalised second-moment constant~\cite{ConwaySloane98}. We assume that the regular high-rate cell sequence is asymptotically quantisation-optimal in Fisher-normalised coordinates, namely
\[
\sum_j q_{n,j}\Phi(C_{n,j})
=
\frac d2\log\kappa_d
+
\frac d2
+
o(1).
\]

\item \label{ass:simplified:empirical}
%\textbf{Prior-weighted empirical convergence of assertions.}
With
\[
\varphi_n(\btheta)
=
-\log\pi(\btheta)
+
\frac12\log|J_n(\btheta)|,
\]
assume that
\[
\sum_j q_{n,j}\varphi_n(\btheta_{n,j})
=
\int_K \varphi_n(\btheta)\pi(\btheta)\,d\btheta
+
o(1).
\]

\end{enumerate}

Let $\mathcal I_n^{\mathrm{ASMML}}$ denote the expected codelength of a high-rate SMML sequence satisfying
Assumptions~\ref{ass:simplified:fisher}--\ref{ass:simplified:empirical}.
Then
\begin{align}
\mathcal I_n^{\mathrm{ASMML}}
&=
\mathbb E_{r_n}
\left[
-\log p_n\{\bX\mid T_n(\bX)\}
\right]
+
h(\pi)
+
\frac12\int_K \pi(\btheta)\log|J_n(\btheta)|\,d\btheta
+
\frac d2\log\kappa_d
+
\frac d2
+
o(1),
\label{eq:asmml-expected-simplified}
\end{align}
where
\[
h(\pi)
=
-\int_K \pi(\btheta)\log\pi(\btheta)\,d\btheta .
\]

In particular, if the model is i.i.d. and $J_n(\btheta)=n I(\btheta)$, then
\begin{align}
\mathcal I_n^{\mathrm{ASMML}}
&=
\mathbb E_{r_n}
\left[
-\log p_n\{\bX\mid T_n(\bX)\}
\right]
+
\frac d2\log n
+
h(\pi)
+
\frac12\int_K \pi(\btheta)\log|I(\btheta)|\,d\btheta
+
\frac d2\log\kappa_d
+
\frac d2
+
o(1).
\label{eq:asmml-expected-iid-simplified}
\end{align}
\end{theorem}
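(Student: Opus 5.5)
The proof is a bookkeeping argument: expand the SMML codelength \eqref{eqn:smml} cell by cell, and use Assumptions~\ref{ass:simplified:fisher}--\ref{ass:simplified:empirical} to replace each term by its local Fisher-geometric approximation. Since the codebook uses the optimal $q_{n,j}=r_n(P_{n,j})$, we first write
\[
\mathcal I_n^{\mathrm{ASMML}}
=
-\sum_j q_{n,j}\log q_{n,j}
+
\mathbb E_{r_n}\bigl[-\log p_n\{\bX\mid T_n(\bX)\}\bigr]
+
\sum_j q_{n,j}\Delta_{n,j}.
\]
This uses $\sum_j\sum_{\bx\in P_{n,j}} r_n(\bx)\log p_n(\bx\mid\btheta_{n,j})=\sum_j q_{n,j}\,\mathbb E_{r_n}[\log p_n(\bX\mid\btheta_{n,j})\mid\bX\in P_{n,j}]$, with the unquantised loss added and subtracted. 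The remaining work is to evaluate the assertion term and the regret term separately.

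For the assertion term we use \ref{ass:simplified:localisation}. It gives $-\log q_{n,j}=-\log\Pi(W_{n,j})-\alpha_{n,j}$, and the $\alpha$-contribution is $o(1)$. Changing variables $\bvartheta=\btheta_{n,j}+J_n(\btheta_{n,j})^{-1/2}\bu$ yields
\[
\Pi(W_{n,j})=|J_n(\btheta_{n,j})|^{-1/2}\int_{C_{n,j}}\pi(\btheta_{n,j}+J_n^{-1/2}\bu)\,d\bu .
\]
By \ref{ass:simplified:fisher} and bounded diameter of $C_{n,j}$, $\sup_{\bu\in C_{n,j}}\|J_n(\btheta_{n,j})^{-1/2}\bu\|\to0$ uniformly in $j$. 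Uniform continuity and positivity of $\pi$ on the compact $K$ then give $\log\Pi(W_{n,j})=\log\pi(\btheta_{n,j})-\tfrac12\log|J_n(\btheta_{n,j})|+\log V_{n,j}+o(1)$ uniformly in $j$. Hence
\[
-\sum_j q_{n,j}\log q_{n,j}=\sum_j q_{n,j}\varphi_n(\btheta_{n,j})-\sum_j q_{n,j}\log V_{n,j}+o(1).
\]

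For the regret term we use \ref{ass:simplified:regret}, which reduces $\Delta_{n,j}$ to a prior-weighted average of $D_{\mathrm{KL}}(P^{(n)}_{\bvartheta}\Vert P^{(n)}_{\btheta_{n,j}})$ over $W_{n,j}$. After the same change of variables, \ref{ass:simplified:kl} applies with $B$ a ball containing all $C_{n,j}$ and replaces the divergence by $\tfrac12\|\bu\|^2+o(1)$ uniformly. The prior weight $\pi(\btheta_{n,j}+J_n^{-1/2}\bu)/\pi(\btheta_{n,j})\to1$ uniformly, and $\|\bu\|^2$ is bounded on the cells. So $\Delta_{n,j}=\frac{1}{2V_{n,j}}\int_{C_{n,j}}\|\bu\|^2d\bu+o(1)$ uniformly, up to the $\varepsilon_{n,j}$, whose weighted sum vanishes. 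Combining with the previous step,
\[
-\sum_j q_{n,j}\log V_{n,j}+\sum_j q_{n,j}\Delta_{n,j}=\sum_j q_{n,j}\Phi(C_{n,j})+o(1)=\tfrac d2\log\kappa_d+\tfrac d2+o(1)
\]
by \ref{ass:simplified:quantisation}. Finally, \ref{ass:simplified:empirical} turns $\sum_j q_{n,j}\varphi_n(\btheta_{n,j})$ into $h(\pi)+\tfrac12\int_K\pi\log|J_n|$, which gives \eqref{eq:asmml-expected-simplified}. In the i.i.d.\ case $\log|nI(\btheta)|=d\log n+\log|I(\btheta)|$, and since $\pi$ integrates to one this gives \eqref{eq:asmml-expected-iid-simplified}.

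The main obstacle is the uniformity in the second and third steps. Every $o(1)$ must hold uniformly over $j$, because the number of cells $k_n$ grows, and the weights $q_{n,j}$ sum to one, which makes a uniform $o(1)$ sum to $o(1)$. This uniformity comes from compactness of $K$, the uniform divergence of $\lambda_{\min}(J_n)$, and the uniform diameter bound on the $C_{n,j}$. One point needs care: the points $\btheta_{n,j}+J_n^{-1/2}\bu$ must stay in a compact neighbourhood of $K$ where $\pi$ is uniformly continuous and bounded below. Since $W_{n,j}\subset K$ by hypothesis, this holds automatically.
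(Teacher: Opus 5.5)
Your proposal is correct and follows the paper's proof essentially step for step. It uses the same add-and-subtract decomposition into the unquantised loss, the assertion term $-\sum_j q_{n,j}\log q_{n,j}$ and the averaged regret $\sum_j q_{n,j}\Delta_{n,j}$, the same Fisher-normalised change of variables with uniform continuity and positivity of $\pi$ on $K$ for both terms, and the same final appeal to (A5), (A6) and $\log|nI(\btheta)|=d\log n+\log|I(\btheta)|$. Your explicit tracking of uniformity in $j$ (so that uniform $o(1)$ errors average to $o(1)$ under weights summing to one) and your note that $W_{n,j}\subset K$ keeps every evaluation point of $\pi$ inside $K$ only make precise what the paper leaves implicit.
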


\begin{remark}[Role of the high-rate assumptions]
\label{rem:high-rate-assumptions}
Assumptions~\ref{ass:simplified:fisher}--\ref{ass:simplified:kl} impose the local Fisher geometry where Fisher-normalised neighbourhoods shrink in parameter space and the local Kullback--Leibler divergence is asymptotically quadratic. Assumptions~\ref{ass:simplified:localisation}--\ref{ass:simplified:regret} then connect the coding partition to this local statistical geometry by requiring each data-space cell to correspond, up to negligible error, to a small parameter-space region whose assertion probability is its prior mass and whose excess detail codelength is its average KL regret.

Assumption~\ref{ass:simplified:quantisation} is the high-rate quantisation condition that, after Fisher normalisation, reduces the local coding problem to balancing cell volume against squared-distance regret. Assumption~\ref{ass:simplified:empirical} is an averaging condition ensuring that the representative assertions are distributed according to the prior measure at the level needed for the expected codelength expansion. Together, these conditions formalise the Wallace--Freeman heuristic that assertion costs are controlled by prior mass, detail costs by Fisher information curvature, and optimisation over local cell volume yields the Wallace--Freeman penalty.
\end{remark}

\begin{proposition}[Wallace--Freeman codelength]
\label{cor:wf-from-asmml-expected}
Assume the conditions of Theorem~\ref{thm:asmml-expected-simplified}. Suppose further that the expected high-rate expansion admits the following local, pointwise realisation. Uniformly for \(\btheta\in K\), and locally
uniformly for Fisher-normalised cell volumes \(V>0\),
%
%Assume the conditions of Theorem~\ref{thm:asmml-expected-simplified}. Suppose further that the high-rate expansion is realised locally, uniformly for $\btheta\in K$, by Fisher-normalised cells of volume $V>0$. Specifically,
%assume that, for $V$ in compact subsets of $(0,\infty)$,
%
\begin{align} 
\Lambda_n^{\mathrm{ASMML}}(\bx,\btheta;V)
&=
-\log p_n(\bx\mid\btheta)
-\log\pi(\btheta)
+\frac12\log|J_n(\btheta)|
-\log V
+
\frac{d\kappa_d}{2}V^{2/d}
+
o_{P_{\btheta}^{(n)}}(1),
\label{eq:asmml-local-from-expected}
\end{align}
where the remainder is uniform in $\btheta\in K$ and locally uniform in $V$.
Then the leading volume-dependent term
\[
g(V)
=
-\log V+\frac{d\kappa_d}{2}V^{2/d}
\]
is uniquely minimised at
\[
V_{\mathrm{opt}}
=
\kappa_d^{-d/2}.
\]
At this optimum,
\[
g(V_{\mathrm{opt}})
=
\frac d2\log\kappa_d+\frac d2.
\]
Hence the locally optimised asymptotic SMML codelength is
\begin{align}
\Lambda_n^{\mathrm{ASMML}}(\bx,\btheta;V_{\mathrm{opt}})
&=
-\log p_n(\bx\mid\btheta)
-\log\pi(\btheta)
+\frac12\log|J_n(\btheta)|
+\frac d2\log\kappa_d
+\frac d2
+
o_{P_{\btheta}^{(n)}}(1).
\end{align}

Consequently, the Wallace--Freeman codelength is
\begin{align}
\mathcal I_{\mathrm{WF}}(\bx,\btheta)
&=
-\log p_n(\bx\mid\btheta)
-\log\pi(\btheta)
+\frac12\log|J_n(\btheta)|
+\frac d2\log\kappa_d
+\frac d2 .
\label{eq:wf-codelength}
\end{align}
The corresponding local Wallace--Freeman estimator is
\begin{align}
\hat{\btheta}_{\mathrm{WF}}(\bx)
=
\argmin_{\btheta\in K}
\mathcal I_{\mathrm{WF}}(\bx,\btheta).
\label{eqn:wf-estimator-local}
\end{align}
Equivalently, since the final two terms in
\eqref{eq:wf-codelength} are independent of $\btheta$,
\[
\hat{\btheta}_{\mathrm{WF}}(\bx)
=
\argmin_{\btheta\in K}
\left\{
-\log p_n(\bx\mid\btheta)
-\log\pi(\btheta)
+
\frac12\log|J_n(\btheta)|
\right\}.
\]
If this minimiser lies in the interior of $K$, and the local expansion holds on a compact neighbourhood of the minimiser, then $\hat{\btheta}_{\mathrm{WF}}$ coincides with the usual Wallace--Freeman estimator~\cite{WallaceFreeman87}.
\end{proposition}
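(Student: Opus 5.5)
The proposition is essentially a one-variable calculus statement layered on top of the assumed local expansion \eqref{eq:asmml-local-from-expected}. I would therefore treat it in three steps: minimise \(g\), substitute the optimum into the local expansion, and read off the estimator.

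\emph{Step 1: minimise \(g\).} Work with \(g\) on \((0,\infty)\) and substitute \(t=V^{2/d}\). Since \(\log V=\frac d2\log t\), this gives
\[
g=-\tfrac d2\log t+\tfrac{d\kappa_d}{2}t .
\]
This function is strictly convex in \(t\). It tends to \(+\infty\) both as \(t\to0\) and as \(t\to\infty\). So it has a unique stationary point, and that point is the global minimiser. Setting
\[
-\tfrac{d}{2t}+\tfrac{d\kappa_d}{2}=0
\]
gives \(t=\kappa_d^{-1}\), that is, \(V_{\mathrm{opt}}=\kappa_d^{-d/2}\). Since \(V\mapsto V^{2/d}\) is a bijection of \((0,\infty)\), uniqueness carries over to \(V\). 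Substituting back,
\[
g(V_{\mathrm{opt}})=\tfrac d2\log\kappa_d+\tfrac{d\kappa_d}{2}\kappa_d^{-1}=\tfrac d2\log\kappa_d+\tfrac d2 .
\]
This matches the constant \(\inf_{a>0}\Phi(aC)\) in Assumption~\ref{ass:simplified:quantisation}, which is a useful consistency check against Theorem~\ref{thm:asmml-expected-simplified}.

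\emph{Step 2: plug in the optimum.} \(V_{\mathrm{opt}}\) is a fixed positive number, so \(\{V_{\mathrm{opt}}\}\) is a compact subset of \((0,\infty)\). Local uniformity of the remainder in \eqref{eq:asmml-local-from-expected} therefore applies at \(V=V_{\mathrm{opt}}\). The remainder stays \(o_{P_{\btheta}^{(n)}}(1)\), uniformly in \(\btheta\in K\), which yields the displayed optimised codelength. Dropping the vanishing remainder defines \(\mathcal I_{\mathrm{WF}}\) in \eqref{eq:wf-codelength}.

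\emph{Step 3: the estimator.} The additive constants \(\frac d2\log\kappa_d+\frac d2\) do not depend on \(\btheta\), so they do not affect the argmin. This gives the reduced form of \eqref{eqn:wf-estimator-local}.

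For the final identification claim, suppose the minimiser over \(K\) is interior. Then it is a local minimiser of the same function over the open set \(\Theta\), on a compact neighbourhood where the expansion holds. It therefore satisfies the same stationarity equation, and has the same defining property, as the estimator of \cite{WallaceFreeman87}. The i.i.d. case \(J_n=nI\) only adds the constant \(\frac d2\log n\) and changes nothing.

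The main obstacle is conceptual rather than technical. One must be careful about which optimisation is being justified. The argmin over \(\btheta\) is of the limiting criterion \(\mathcal I_{\mathrm{WF}}\), not of \(\Lambda_n^{\mathrm{ASMML}}\) including its remainder. Justifying the passage from one to the other requires only that the remainder is uniform in \(\btheta\), which is exactly what the hypothesis supplies. I would state explicitly that the estimator is \emph{defined} via \eqref{eq:wf-codelength}. No claim is made that argmins of the perturbed criteria converge, so no further argument is needed.
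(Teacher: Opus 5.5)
Your proposal is correct and follows the paper's route. The paper gives no separate proof: the derivation is embedded in the statement itself (minimise $g$, evaluate the assumed expansion at $V_{\mathrm{opt}}$, drop the $\btheta$-independent constants), and your substitution $t=V^{2/d}$ simply makes the strict convexity and the value $g(V_{\mathrm{opt}})=\frac d2\log\kappa_d+\frac d2$ explicit.

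Keep your final reading that $\mathcal I_{\mathrm{WF}}$ is defined by discarding the remainder. Drop your earlier claim that uniformity in $\btheta$ justifies passing argmins from $\Lambda_n^{\mathrm{ASMML}}$ to $\mathcal I_{\mathrm{WF}}$: an $o_{P_{\btheta}^{(n)}}(1)$ remainder, whose measure changes with $\btheta$, does not by itself control the minimiser of the perturbed criterion.
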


Theorem~\ref{thm:asmml-expected-simplified} and Proposition~\ref{cor:wf-from-asmml-expected} detail the derivation of the Wallace--Freeman penalty. The remaining sections are concerned with the statistical properties of the resulting estimator. For this purpose, the high-rate quantisation constants are irrelevant as they do not depend on \(\btheta\). In i.i.d. models, after normalising the codelength by \(n\), the Wallace--Freeman criterion is an empirical likelihood criterion penalised by a term with weight \(n^{-1}\). This places the estimator directly within the standard theory of penalised M-estimation.

%The Wallace--Freeman estimator has been used in a wide range of statistical models, including factor analysis~\cite{WallaceFreeman92}, decision trees~\cite{WallacePatrick93}, and mixture models~\cite{WallaceDowe00}.

%
%
\section{The Wallace--Freeman estimator as a penalised M-estimator}
\label{sec:M:estimator}
For the remainder of the paper we specialise to the i.i.d. case, so that
\[
p_n(\bx\mid\btheta)=\prod_{i=1}^n p(\bx_i\mid\btheta),
\qquad
J_n(\btheta)=nI(\btheta),
\]
where $I(\btheta)$ denotes the per-observation Fisher information.
The Wallace--Freeman estimator~\eqref{eqn:wf-estimator-local} can be written, up to $\btheta$-independent constants, as the minimiser of a penalised empirical criterion of the form
\begin{align}
Q_n(\btheta)
=
\frac1n \sum_{i=1}^n \rho(\bx_i,\btheta)
+
\lambda_n \operatorname{pen}(\btheta),
\label{eq:generic-penM}
\end{align}
where
\begin{align}
\rho(\bx_i,\btheta) = -\log p(\bx_i \mid \btheta),
\quad
\operatorname{pen}(\btheta) = -\log \pi(\btheta) + \frac12 \log |I(\btheta)|,
\label{eq:wf-penalty}
\end{align}
and $\lambda_n = 1/n$. Thus,
\begin{align}
\hat{\btheta}_{\mathrm{WF}}(\bx)
:=
\hat{\btheta}_n
=
\argmin_{\btheta\in\Theta}
\left\{
\frac1n \sum_{i=1}^n \bigl[-\log p(\bx_i \mid \btheta)\bigr]
+
\frac1n\left(
-\log \pi(\btheta)+\frac12\log |I(\btheta)|
\right)
\right\}.
\label{eq:wf87:penalised:m}
\end{align}
The prior contribution favours regions of larger prior mass, while the Fisher information term penalises regions of high local curvature. The Fisher information term is also what ensures invariance under smooth reparameterisation. Since the penalty enters at order $n^{-1}$, the Wallace--Freeman estimator is asymptotically first-order equivalent to maximum likelihood while differing at smaller order through an explicit deterministic correction.

Let
%
%\begin{align}    
$\psi(\bx,\btheta)=\nabla_\theta \rho(\bx,\btheta)$,
%\end{align}
%
and define the stationarity equation
\begin{align}
\Psi_n(\btheta)
=
\frac1n\sum_{i=1}^n \psi(\bx_i,\btheta)
+
\lambda_n \nabla_\theta \operatorname{pen}(\btheta).
\label{eq:wf-score}
\end{align}
The next theorem gives standard large-sample properties of the Wallace--Freeman estimator under regularity conditions. Its proof is deferred to Appendix~\ref{app:penm-proof}.
\begin{theorem}[Large-sample behaviour of the Wallace--Freeman estimator]
\label{thm:wf87:penm}
Assume that \(X_1,\dots,X_n\) are i.i.d. under \(p(\cdot \mid\btheta_0)\) and
\begin{enumerate}[label=(B\arabic*)]
\item The function $Q(\bm{\theta}) := \mathbb{E}_{\theta_0} \left\{ \rho(X, \bm{\theta}) \right\}$ has a unique minimiser at $\bm{\theta}_0 \in {\rm int}(\Theta)$.
\item $\rho(x,\bm{\theta})$ is twice continuously differentiable in a neighbourhood of $\bm{\theta}_0$ and
\begin{align*}
    {\bf A} :=\nabla^2_\theta Q(\bm{\theta}_0) = \mathbb{E}_{\theta_0} \left\{ \nabla_\theta \psi(X, \bm{\theta}_0)\right\}
\end{align*}
exists  and is positive definite, where $\psi({\bf x},\bm{\theta}) = \nabla_\theta \rho({\bf x},\bm{\theta})$.
\item For every compact set $K \subset \Theta$,
\begin{align}
    \sup_{\bm{\theta} \in K} \left| \frac{1}{n}\sum_{i=1}^n \rho({\bf x}_i, \bm{\theta}) - Q(\bm{\theta})\right| \xrightarrow[]{p} 0 .
\end{align}
\item $\lambda_n \to 0$ and $\sqrt{n} \lambda_n \to 0$. In particular, $\lambda_n = 1/n$ satisfies this.
\item ${\rm pen}(\bm{\theta})$ is twice continuously differentiable in a neighbourhood of $\bm{\theta}_0$ and $\nabla_\theta {\rm pen}(\bm{\theta})$ is locally bounded near $\bm{\theta}_0$.
\item Either (i)~ $\Theta$ is compact, or (ii)~$\Theta$ is closed and $Q_n(\bm{\theta})$ is coercive with probability approaching 1, ensuring the minimum is attained.
\item 
\begin{align}
\frac{1}{\sqrt n}\sum_{i=1}^n \psi(X_i,\bm{\theta}_0)
\xrightarrow{d} N({\bf 0}, {\bf B}),
\qquad
{\bf B}:=\mathbb{E}_{\theta_0}\{\psi(X,\bm{\theta}_0)\psi(X,\bm{\theta}_0)'\}.
\end{align}
\item For some neighbourhood \(U\) of \(\bm{\theta}_0\),
\begin{align}
\sup_{\bm{\theta}\in U}
\left\|
\frac1n\sum_{i=1}^n \nabla_\theta\psi(X_i,\bm{\theta})
- \mathbb{E}_{\theta_0}\{\nabla_\theta\psi(X,\bm{\theta})\}
\right\|
\xrightarrow{p}0.
\end{align}
\end{enumerate}
%
% The stationarity condition for the Wallace--Freeman estimator is
% %
% \begin{align}
%     \Psi_n(\bm{\theta}) = \frac{1}{n} \sum_{i=1}^n \psi({\bf x}_i, \bm{\theta}) + \lambda_n \nabla_{\bm{\theta}} {\rm pen}(\bm{\theta}) = 0,
% \end{align}
% %
% with $\lambda_n = 1/n$.
%
\begin{enumerate}[label=(\roman*)]
\item Existence: A global minimiser $\hat{\bm{\theta}}_n \in \Theta$ exists with probability tending to 1.

\item Consistency: $\hat{\btheta}_n\xrightarrow[]{p} \btheta_0$.

\item Rate of convergence:
\[
\hat{\btheta}_n-\btheta_0
=
-\mathbf{A}^{-1}
\frac1n\sum_{i=1}^n\psi(X_i,\btheta_0)
+
o_p(n^{-1/2}),
\]
and hence
\[
\hat{\btheta}_n-\btheta_0=O_p(n^{-1/2}).
\]
%The \(O(n^{-1})\) displacement induced by the Wallace--Freeman penalty is not resolved by this first-order expansion; it is derived in Theorem~\ref{thm:wf87:bias}.
%
%
\item Asymptotic distribution:
\begin{align}
\sqrt{n} (\hat{\bm{\theta}}_n - \bm{\theta}_0) \xrightarrow[]{d} {\rm N}({\bf 0}_d, {\bf A}^{-1} {\bf B} {\bf A}^{-1}), 
\quad 
{\bf B} := \mathbb{E}_{\theta_0} \left\{ \psi(X,\bm{\theta}_0) \psi(X,\bm{\theta}_0)^\prime \right\} .
\end{align}
Moreover, if $\rho({\bf x},\bm{\theta}) = -\log p({\bf x} \mid \bm{\theta})$ is correctly specified so that ${\bf A} = {\bf B} = {\bf I}(\bm{\theta}_0)$, then
\begin{align*}
    \sqrt{n} (\hat{\bm{\theta}}_n - \bm{\theta}_0) \xrightarrow[]{d} {\rm N}({\bf 0}_d, {\bf I}(\bm{\theta}_0)^{-1} ).
\end{align*}
\end{enumerate}
\end{theorem}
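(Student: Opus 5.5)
The plan is to treat $Q_n$ in \eqref{eq:generic-penM} as a perturbation of the unpenalised empirical risk $M_n(\btheta)=n^{-1}\sum_i\rho(X_i,\btheta)$. We then run the standard argmin-consistency argument followed by a one-step Taylor expansion of the estimating equation \eqref{eq:wf-score}.

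\emph{Existence and consistency.} Under (B6)(i), $Q_n$ is continuous on a compact set and therefore attains its minimum. Under (B6)(ii), coercivity confines the minimiser, with probability tending to one, to a compact sublevel set, so a minimiser again exists. In either case $\hat\btheta_n$ lies with probability tending to one in a fixed compact $K\subset\Theta$ that contains a neighbourhood of $\btheta_0$. On $K$ the penalty is bounded above and below, which is the point needing care: (B5) only gives local control, so we assume $\operatorname{pen}$ is continuous on $K$, or use coercivity to absorb it. Hence $\sup_K|\lambda_n\operatorname{pen}|\to0$, and combined with (B3) this gives $\sup_K|Q_n-Q|\xrightarrow{p}0$. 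Since $Q$ is continuous with a unique minimiser at $\btheta_0$ by (B1), it is well separated on $K$: for each $\varepsilon>0$,
\[
\inf_{\|\btheta-\btheta_0\|\ge\varepsilon,\ \btheta\in K}Q(\btheta)>Q(\btheta_0).
\]
The usual argmin theorem then yields $\hat\btheta_n\xrightarrow{p}\btheta_0$. I expect this step, and in particular the handling of the penalty away from $\btheta_0$ in the noncompact case, to be the main technical obstacle. Everything after it is routine.

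\emph{Linear expansion.} By consistency, $\hat\btheta_n$ lies in the interior neighbourhood $U$ of (B8) with probability tending to one. There it is an interior minimiser, so $\Psi_n(\hat\btheta_n)=0$. Expanding componentwise with the mean value theorem gives
\[
0=\frac1n\sum_i\psi(X_i,\btheta_0)+\bar{\mathbf A}_n(\hat\btheta_n-\btheta_0)+\lambda_n\nabla_\theta\operatorname{pen}(\hat\btheta_n),
\]
where the rows of $\bar{\mathbf A}_n$ are $n^{-1}\sum_i\nabla_\theta\psi(X_i,\tilde\btheta)$ evaluated at intermediate points. By (B8), continuity of $\btheta\mapsto\mathbb E\nabla\psi(X,\btheta)$ (from (B2)) and consistency, $\bar{\mathbf A}_n\xrightarrow{p}\mathbf A$, which is invertible. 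By (B5) the penalty gradient is $O_p(1)$, and by (B4) $\sqrt n\lambda_n\to0$, so the penalty term is $o_p(n^{-1/2})$. Moreover $n^{-1}\sum_i\psi(X_i,\btheta_0)=O_p(n^{-1/2})$ by (B7). Solving the expansion gives
\[
\hat\btheta_n-\btheta_0=-\mathbf A^{-1}n^{-1}\sum_i\psi(X_i,\btheta_0)+o_p(n^{-1/2}),
\]
and hence $\hat\btheta_n-\btheta_0=O_p(n^{-1/2})$.

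\emph{Normality.} Multiplying the expansion by $\sqrt n$ and applying (B7) with Slutsky's lemma gives the limit $N(\mathbf 0,\mathbf A^{-1}\mathbf B\mathbf A^{-1})$. Under correct specification, the information identity gives $\mathbf A=\mathbf B=\mathbf I(\btheta_0)$, so the asymptotic covariance reduces to $\mathbf I(\btheta_0)^{-1}$.
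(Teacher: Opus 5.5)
Your proposal is correct and follows the paper's route. Existence comes from Weierstrass (or coercivity), consistency from the argmin theorem, the linear representation from a mean-value expansion of the penalised estimating equation at an interior minimiser, and normality from (B7) with Slutsky.

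The differences are minor and work in your favour:
\begin{itemize}
\item You leave the penalty gradient evaluated at $\hat{\btheta}_n$ rather than Taylor-expanding it. This uses only local boundedness of $\nabla_\theta\operatorname{pen}$, whereas the paper's expansion also invokes $\nabla_\theta^2\operatorname{pen}$.
\item You expand row by row with separate intermediate points.
\item You state explicitly what the paper's consistency step assumes without comment: control of the penalty away from $\btheta_0$, and, in the noncompact case, confinement of $\hat{\btheta}_n$ to a fixed compact set.
\end{itemize}
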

\begin{proof}[Proof sketch]
The result follows from standard penalised M-estimation arguments. Under Assumptions (B1)--(B8) the criterion $Q_n$ admits a global minimiser, converges uniformly on compacts to its population counterpart, and the penalty vanishes asymptotically. A first-order Taylor expansion of the estimating equation~\eqref{eq:wf-score} around $\btheta_0$, together with the central limit theorem and the local law of large numbers for the derivative process, yields the asymptotic linear expansion and limiting distribution. Details are given in Appendix~\ref{app:penm-proof}.
\end{proof}
Evaluated at its minimiser, the Wallace--Freeman codelength has the form
\[
\mathcal I_{\mathrm{WF}}(\bx,\hat\btheta_{\mathrm{WF}})
=
-\log p_n(\bx\mid\hat\btheta_{\mathrm{WF}})
+
\frac d2\log n
+
O_p(1).
\]
Since \(\hat\btheta_{\mathrm{WF}}\) and \(\hat\btheta_{\mathrm{MLE}}\) are first-order equivalent, the above codelength has the same leading large-sample form as the well-known Bayesian information criterion (BIC)~\cite{Schwarz78}, with the prior, Fisher information, and quantisation terms contributing only at order \(O(1)\).
\section{Bias of the Wallace--Freeman estimator}
\label{sec:bias}
We now quantify the first-order difference between the Wallace--Freeman estimator and the maximum likelihood estimator, and the resulting effect on asymptotic bias. Let $\ell(\btheta)=\log p_n(\bx \mid \btheta)$ denote the full-sample log-likelihood, and write
\begin{align}
    \ell_i = \frac{\partial \ell}{\partial \theta_i},
    \qquad
    \ell_{ij} = \frac{\partial^2 \ell}{\partial \theta_i \partial \theta_j},
    \qquad
    \ell_{ijk} = \frac{\partial^3 \ell}{\partial \theta_i \partial \theta_j \partial \theta_k},
\end{align}
with corresponding expectations
\begin{align}
    \kappa_{ij} = \mathbb{E}_{\btheta_0}\{\ell_{ij}\},
    \qquad
    \kappa_{ijk} = \mathbb{E}_{\btheta_0}\{\ell_{ijk}\},
    \qquad
    \kappa_{ij,k}
    =
    \left.
    \frac{\partial \kappa_{ij}}{\partial \theta_k}
    \right|_{\btheta=\btheta_0}.
\end{align}
Let
\[
K:=K(\btheta_0)=\{-\kappa_{ij}\}=nI(\btheta_0),
\qquad
K^{-1}=\{\kappa^{ij}\}.
\]

The classical first-order bias expansion for the maximum likelihood estimator due to Cox and Snell~\cite{CoxSnell68} is
\begin{align}
    [\operatorname{Bias}(\hat{\btheta}_{\rm MLE})]_s
    =
    \sum_{i=1}^d \sum_{j=1}^d \sum_{l=1}^d
    \kappa^{si}\kappa^{jl}
    \left(
        \kappa_{ij,l}-\frac12 \kappa_{ijl}
    \right)
    + O(n^{-2}),
    \qquad
    s=1,\ldots,d,
    \label{eqn:bias:MLE:coxsnell}
\end{align}
or, equivalently~\cite{CordeiroKlein94},
\begin{align}
    \operatorname{Bias}(\hat{\btheta}_{\rm MLE})
    =
    K^{-1}A\,\operatorname{vec}(K^{-1})
    + O(n^{-2}),
    \label{eqn:bias:MLE:ck}
\end{align}
where the $(d\times d^2)$ cumulant matrix $A$ is
\begin{align}
\label{eqn:cumulants}
    A
    =
    \bigl[
    A^{(1)} \mid A^{(2)} \mid \cdots \mid A^{(d)}
    \bigr],
    \qquad
    A^{(l)}=\{a_{ij}^{\,l}\},
    \qquad
    a_{ij}^{\,l}
    =
    \kappa_{ij}^{(l)}-\frac12\kappa_{ijl},
    \qquad
    \kappa_{ij}^{(l)}
    =
    \frac{\partial \kappa_{ij}}{\partial \theta_l},
\end{align}
for $i,j,l=1,\ldots,d$.
The next theorem shows that the Wallace--Freeman estimator differs from the maximum likelihood estimator by an explicit $O(n^{-1})$ shift, and that the Cox--Snell expansion is modified by an additional penalty-driven bias term. The proof is deferred to Appendix~\ref{app:bias-proof}.

\begin{theorem}[Bias of the Wallace--Freeman estimator]
\label{thm:wf87:bias}
Let $\bx\in\mathcal{X}^n$ be generated from the model with true parameter $\btheta_0\in\operatorname{int}(\Theta)$. Assume that:
\begin{enumerate}[label=(C\arabic*)]
%\item $\ell(\btheta)$ is three times continuously differentiable in a neighbourhood of $\btheta_0$, the required third derivatives have finite moments, and differentiation and expectation may be interchanged up to third order;
\item $\ell(\btheta)$ is twice continuously differentiable in a neighbourhood of \(\btheta_0\), \(\hat{\btheta}_{\rm MLE}\) is an interior root-\(n\) consistent solution of \(\nabla\ell(\btheta)=0\), and \(\hat{\btheta}_{\rm MLE}\) satisfies the Cox--Snell expansion~\eqref{eqn:bias:MLE:ck}.

\item the observed information satisfies
\begin{align}
    -\nabla^2 \ell(\hat{\btheta}_{\rm MLE})
    =
    K(\btheta_0)+O_p(n^{1/2}),
\end{align}
with $K(\btheta_0)$ nonsingular;

\item the maximum likelihood estimator is root-$n$ consistent:
\[
\hat{\btheta}_{\rm MLE}-\btheta_0 = O_p(n^{-1/2});
\]

\item the vector
\begin{align}
    {\bf a}(\btheta)
    :=
    -\nabla_\theta \operatorname{pen}(\btheta)
    =
    \nabla_\theta \log \pi(\btheta)
    -
    \frac12 \nabla_\theta \log |I(\btheta)|
    \label{eqn:wfbias:a}
\end{align}
is continuously differentiable in a neighbourhood of $\btheta_0$;
\item the remainder in the first-order Taylor expansion of $\nabla \ell(\hat{\bm{\theta}}_{\rm WF})$ about $\hat{\bm{\theta}}_{\rm MLE}$ is $o_p(n^{-1})$, and the corresponding error term is uniformly integrable after multiplication by $n$.
\end{enumerate}

Then
\begin{align}
\hat{\btheta}_{\rm WF}-\hat{\btheta}_{\rm MLE}
=
K(\btheta_0)^{-1}{\bf a}(\btheta_0)+o_p(n^{-1})
=
\frac1n I(\btheta_0)^{-1}{\bf a}(\btheta_0)+o_p(n^{-1}).
\label{eqn:mml:mle}
\end{align}
Moreover,
\begin{align}
\operatorname{Bias}(\hat{\btheta}_{\rm WF})
&=
\operatorname{Bias}(\hat{\btheta}_{\rm MLE})
+
K(\btheta_0)^{-1}{\bf a}(\btheta_0)
+
o(n^{-1})
\nonumber\\
&=
K(\btheta_0)^{-1}A(\btheta_0)\operatorname{vec}\!\bigl(K(\btheta_0)^{-1}\bigr)
+
K(\btheta_0)^{-1}{\bf a}(\btheta_0)
+
o(n^{-1}),
\end{align}
where $A(\btheta_0)$ is the Cox--Snell cumulant matrix in \eqref{eqn:cumulants}.
\end{theorem}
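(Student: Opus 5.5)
The argument compares the two stationarity equations directly. The maximum likelihood estimator satisfies $\nabla\ell(\hat{\btheta}_{\rm MLE})=0$. Multiplying \eqref{eq:wf-score} by $n$ shows that the Wallace--Freeman estimator satisfies
\[
\nabla\ell(\hat{\btheta}_{\rm WF})+{\bf a}(\hat{\btheta}_{\rm WF})=0 .
\]
Here Theorem~\ref{thm:wf87:penm} provides existence, consistency and interiority of $\hat{\btheta}_{\rm WF}$ with probability tending to one, and we use $-\nabla{\rm pen}={\bf a}$.

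\textbf{Step 1: expand the score about the MLE.} Expanding $\nabla\ell(\hat{\btheta}_{\rm WF})$ about $\hat{\btheta}_{\rm MLE}$ and using (C5) gives
\[
0=\nabla^2\ell(\hat{\btheta}_{\rm MLE})(\hat{\btheta}_{\rm WF}-\hat{\btheta}_{\rm MLE})+{\bf a}(\hat{\btheta}_{\rm WF})+R_n .
\]
The remainder $R_n$ is of smaller order than the linear term.

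\textbf{Step 2: a crude rate for the difference.} Both estimators are root-$n$ consistent, by (C3) and Theorem~\ref{thm:wf87:penm}(iii). Hence ${\bf a}(\hat{\btheta}_{\rm WF})={\bf a}(\btheta_0)+O_p(n^{-1/2})=O_p(1)$ by (C4). By (C2), $-\nabla^2\ell(\hat{\btheta}_{\rm MLE})=K+O_p(n^{1/2})$ with $K=nI(\btheta_0)$, so its inverse is $K^{-1}\{1+O_p(n^{-1/2})\}=O_p(n^{-1})$. Inverting the expansion yields the crude bound $\hat{\btheta}_{\rm WF}-\hat{\btheta}_{\rm MLE}=O_p(n^{-1})$.

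\textbf{Step 3: refine to the displayed shift.} Feeding the crude bound back in gives
\[
\hat{\btheta}_{\rm WF}-\hat{\btheta}_{\rm MLE}=\{K+O_p(n^{1/2})\}^{-1}\{{\bf a}(\btheta_0)+O_p(n^{-1/2})\}+o_p(n^{-1}).
\]
This equals $K^{-1}{\bf a}(\btheta_0)+O_p(n^{-3/2})+o_p(n^{-1})$, which is \eqref{eqn:mml:mle}; the second form follows from $K^{-1}=n^{-1}I(\btheta_0)^{-1}$.

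\textbf{Step 4: pass from the stochastic expansion to the bias.} Write
\[
\hat{\btheta}_{\rm WF}=\hat{\btheta}_{\rm MLE}+K^{-1}{\bf a}(\btheta_0)+r_n,\qquad r_n=o_p(n^{-1}),
\]
and take expectations. The middle term is deterministic. The bias of $\hat{\btheta}_{\rm MLE}$ is given by the Cox--Snell formula \eqref{eqn:bias:MLE:ck}, which (C1) assumes. It remains to show $\mathbb E\,r_n=o(n^{-1})$.

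This last step is the main obstacle, because convergence in probability does not control expectations. I would handle it with the uniform integrability clause of (C5). Specifically, $n r_n$ is a sum of three pieces:
\begin{itemize}
\item the Taylor remainder term;
\item $n\{(-\nabla^2\ell(\hat{\btheta}_{\rm MLE}))^{-1}-K^{-1}\}{\bf a}$;
\item $nK^{-1}\{{\bf a}(\hat{\btheta}_{\rm WF})-{\bf a}(\btheta_0)\}$.
\end{itemize}
Each piece tends to zero in probability. Provided each is uniformly integrable (assumed in (C5), and implied for the last two by local boundedness of ${\bf a}$ and of the inverse information), Vitali's theorem gives $n\,\mathbb E\,r_n\to0$.

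Combining the pieces gives
\[
\operatorname{Bias}(\hat{\btheta}_{\rm WF})=K^{-1}A\,\operatorname{vec}(K^{-1})+K^{-1}{\bf a}(\btheta_0)+o(n^{-1}),
\]
which is the second display of the theorem.
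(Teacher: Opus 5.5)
Your proposal follows the paper's proof essentially step for step: the same Taylor expansion of the score about $\hat{\btheta}_{\rm MLE}$, the crude $O_p(n^{-1})$ rate, the refinement via $\{K+O_p(n^{1/2})\}^{-1}=K^{-1}+O_p(n^{-3/2})$, and the passage to the bias through the uniform integrability clause of (C5) plus the Cox--Snell formula. Your appeal to Theorem~\ref{thm:wf87:penm} to place $\hat{\btheta}_{\rm WF}$ near $\btheta_0$ is slightly more careful than the paper, which uses ${\bf a}(\hat{\btheta}_{\rm WF})=O_p(1)$ implicitly, though it does import (B1)--(B8). One caveat: local boundedness of ${\bf a}$ and of the inverse information near $\btheta_0$ does not make the last two pieces of $n r_n$ uniformly integrable, because it says nothing about the low-probability events where $\hat{\btheta}_{\rm WF}$ is far from $\btheta_0$ or the observed information is nearly singular. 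You should therefore rest that step entirely on (C5), read (as the paper does) as uniform integrability of the whole error $n r_n$.
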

Assumption (C5) is a high-level remainder condition ensuring that the Taylor expansion of the Wallace--Freeman score equation is valid at the $n^{-1}$ scale and that expectations may be taken term-wise.

\begin{corollary}
\label{cor:wf87:jeffreys}
If $\pi(\btheta)\propto |I(\btheta)|^{1/2}$, then ${\bf a}(\btheta)\equiv 0$, so
\[
\hat{\btheta}_{\rm WF}-\hat{\btheta}_{\rm MLE}=o_p(n^{-1}),
\]
and the two estimators have the same $O(n^{-1})$ bias. 
If instead $\pi(\btheta)\propto |I(\btheta)|$, then
\[
{\bf a}(\btheta)=\frac12 \nabla_\theta \log |I(\btheta)|,
\]
and minimising the Wallace--Freeman codelength is equivalent to maximising Firth's penalised likelihood \cite{Firth93} in canonical exponential families. 
\end{corollary}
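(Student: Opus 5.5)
The corollary follows by substituting the two priors into the definition of \({\bf a}(\btheta)\) in \eqref{eqn:wfbias:a} and then invoking Theorem~\ref{thm:wf87:bias}.

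\emph{Jeffreys prior.} Take \(\pi(\btheta)=c\,|I(\btheta)|^{1/2}\), where \(c\) is a normalising constant. Then \(\log\pi(\btheta)=\log c+\frac12\log|I(\btheta)|\). The constant disappears under differentiation, so
\[
\nabla_\theta\log\pi(\btheta)=\tfrac12\nabla_\theta\log|I(\btheta)|,
\]
which cancels the second term of \({\bf a}\) exactly and gives \({\bf a}\equiv 0\). This is trivially continuously differentiable, so (C4) holds. The displacement \eqref{eqn:mml:mle} then reads \(\hat\btheta_{\rm WF}-\hat\btheta_{\rm MLE}=o_p(n^{-1})\). The bias expansion in Theorem~\ref{thm:wf87:bias} reduces to \(\operatorname{Bias}(\hat\btheta_{\rm WF})=\operatorname{Bias}(\hat\btheta_{\rm MLE})+o(n^{-1})\). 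Hence both estimators share the Cox--Snell term \(K^{-1}A\operatorname{vec}(K^{-1})\), which is the whole of the \(O(n^{-1})\) bias.

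A cleaner way to see this is at the level of the criterion. With \(\pi\propto|I|^{1/2}\), the penalty \(\operatorname{pen}(\btheta)\) in \eqref{eq:wf-penalty} is constant. So \(Q_n\) in \eqref{eq:generic-penM} differs from the normalised negative log-likelihood only by a constant, and on \(\Theta\) the two estimators coincide exactly. I would mention this, noting that it needs \(\pi\) to be proper on the relevant domain (for instance the compact \(K\) of Section~\ref{sec:wf}).

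\emph{Prior proportional to \(|I|\).} Take \(\pi\propto|I(\btheta)|\). Then \(\nabla\log\pi=\nabla\log|I|\), and
\[
{\bf a}(\btheta)=\nabla\log|I|-\tfrac12\nabla\log|I|=\tfrac12\nabla_\theta\log|I(\btheta)|.
\]
For the Firth connection, write the WF criterion up to constants as
\[
-\ell(\btheta)-\log|I(\btheta)|+\tfrac12\log|I(\btheta)| \;=\; -\bigl\{\ell(\btheta)+\tfrac12\log|I(\btheta)|\bigr\}.
\]
Minimising this is the same as maximising \(\ell(\btheta)+\frac12\log|I(\btheta)|\), which is Firth's \cite{Firth93} Jeffreys-penalised log-likelihood \(\ell^*(\btheta)=\ell(\btheta)+\frac12\log|nI(\btheta)|\); the factor \(n\) only adds a constant.

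It remains to justify the phrase ``in canonical exponential families''. In Firth's framework, maximising \(\ell^*\) coincides with his score-modification bias reduction only for canonical parameterisations. There the observed and expected information agree, and the modified score \(\nabla\ell+\frac12\nabla\log|I|\) equals Firth's adjusted score \(U^*=U-K\,b_1\). The equality \(U^*=U-K\,b_1\) is stated by Firth for canonical families, and I would cite it rather than re-derive it.

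The only delicate point is this interpretive step. The algebraic identity between the WF criterion and \(\ell^*\) holds in any model; the family restriction concerns only the equivalence with Firth's bias-reduction score, which I take from \cite{Firth93}. Everything else is direct substitution into Theorem~\ref{thm:wf87:bias}, whose hypotheses are assumed to hold.
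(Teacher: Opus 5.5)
Your proposal is correct and follows the paper's argument: the paper likewise substitutes each prior into the definition \eqref{eqn:wfbias:a} of ${\bf a}(\btheta)$ and reads the consequences off Theorem~\ref{thm:wf87:bias}. Your two additions are accurate elaborations of what the paper leaves implicit. First, under the Jeffreys prior $\operatorname{pen}(\btheta)$ is constant, so the two estimators in fact coincide exactly. Second, the identity with Firth's $\ell+\frac12\log|I|$ holds in any model, and the canonical-family restriction only concerns its interpretation as Firth's bias-reducing score adjustment.
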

The proof of Corollary~\ref{cor:wf87:jeffreys} follows immediately from \eqref{eqn:wfbias:a}. 
The Wallace--Freeman estimator includes two important limiting cases. With Jeffreys prior, the \(O(n^{-1})\) displacement from the MLE vanishes. With the prior proportional to \(|I(\btheta)|\), the Wallace--Freeman penalty coincides with the Jeffreys-type penalty appearing in Firth's bias-reducing likelihood. 
\section{Weibull distribution example}
\label{sec:example}
We illustrate the general bias formula using the Weibull distribution with density
\begin{align}
    f(x \mid k,\lambda)
    =
    \frac{k}{\lambda}
    \left(\frac{x}{\lambda}\right)^{k-1}
    \exp\!\left\{-\left(\frac{x}{\lambda}\right)^k\right\},
    \qquad x>0,
\end{align}
where $\btheta=(k,\lambda)$, with $k>0$ the shape parameter and $\lambda>0$ the scale parameter. For observations $\bx=(x_1,\ldots,x_n)$, the full log-likelihood is
\begin{align}
    \ell(\btheta)
    =
    n(\log k-k\log\lambda)
    +(k-1)\sum_{i=1}^n \log x_i
    -
    \sum_{i=1}^n \left(\frac{x_i}{\lambda}\right)^k.
\end{align}
The expected per-sample Fisher information matrix and corresponding determinant are
\begin{align}
I(\btheta)
=
\begin{pmatrix}
\dfrac{6(\gamma-1)^2+\pi^2}{6k^2} & \dfrac{\gamma-1}{\lambda} \\[1.2ex]
\dfrac{\gamma-1}{\lambda} & \dfrac{k^2}{\lambda^2}
\end{pmatrix},
\qquad
|I(\btheta)|
=
\frac{\pi^2}{6\lambda^2},
\end{align}
with the inverse Fisher information matrix given by
\begin{align}
I(\btheta)^{-1}
=
\frac{6}{\pi^2}
\begin{pmatrix}
k^2 & -(\gamma-1)\lambda \\[0.8ex]
-(\gamma-1)\lambda &
\dfrac{\{(\gamma-1)^2+\pi^2/6\}\lambda^2}{k^2}
\end{pmatrix},
%
% \frac{1}{\pi^2}
% \begin{pmatrix}
% 6k^2 & -6(\gamma-1)\lambda \\[0.8ex]
% -6(\gamma-1)\lambda &
% \dfrac{\{6(\gamma-1)^2+\pi^2\}\lambda^2}{k^2}
% \end{pmatrix},
\end{align}
where $\gamma \approx 0.5772$ denotes the Euler--Mascheroni constant.
Substituting the corresponding cumulant matrix $A$ into the Cox--Snell formula \eqref{eqn:bias:MLE:ck} yields the first-order bias of the maximum likelihood estimator \cite{TanakaEtAl17}:
\begin{align}
\operatorname{Bias}(\hat{\btheta}_{\rm MLE})
=
\frac1n
\begin{pmatrix}
\dfrac{18k(\pi^2-2\zeta(3))}{\pi^4} \\[2ex]
\dfrac{\lambda\left\{72(\gamma-1)k\zeta(3)+6\pi^2\bigl(5k+\gamma(-4k+\gamma-2)+1\bigr)+\pi^4(1-2k)\right\}}{2\pi^4k^2}
\end{pmatrix}
+
O(n^{-2}),
\label{eq:weibull-mle-bias}
\end{align}
where $\zeta(3) \approx 1.2021$ denotes Ap\'ery's constant. For the Weibull model, substituting the Cox--Snell expansion for the maximum likelihood estimator and the first-order Wallace--Freeman correction from Theorem~\ref{thm:wf87:bias} yields the first-order bias formula. To evaluate the Wallace--Freeman bias, assume independent half-Cauchy priors,
\begin{align}
    \pi(k,\lambda)
    =
    \pi_k(k)\pi_\lambda(\lambda)
    =
    \frac{2}{\pi(1+k^2)}
    \frac{2}{\pi(1+\lambda^2)},
\end{align}
which are often used as weakly informative priors for positive and scale-type parameters \cite{Gelman06,PolsonScott12}. Then
\begin{align}
{\bf a}(\btheta)
=
-\nabla_\theta \operatorname{pen}(\btheta)
=
\begin{pmatrix}
-\dfrac{2k}{k^2+1} \\[1.5ex]
\dfrac{1-\lambda^2}{\lambda(\lambda^2+1)}
\end{pmatrix}.
\end{align}
Applying Theorem~\ref{thm:wf87:bias}, we obtain
\begin{align}
\operatorname{Bias}(\hat{\btheta}_{\rm WF})
=
\operatorname{Bias}(\hat{\btheta}_{\rm MLE})
+
\frac1n
\begin{pmatrix}
\dfrac{6}{\pi^2}
\left\{
\dfrac{(\gamma-1)(\lambda^2-1)}{\lambda^2+1}
-
\dfrac{2k^3}{k^2+1}
\right\}
\\[3ex]
\dfrac{\lambda}{\pi^2k^2}
\left\{
\dfrac{12(\gamma-1)k^3}{k^2+1}
-
\dfrac{\{6(\gamma-1)^2+\pi^2\}(\lambda^2-1)}{\lambda^2+1}
\right\}
\end{pmatrix}
+
o(n^{-1}).
\label{eq:weibull-mml-bias}
\end{align}
The first term in \eqref{eq:weibull-mml-bias} is the known Cox--Snell bias of the MLE. The second term is the new Wallace--Freeman contribution obtained from Theorem~\ref{thm:wf87:bias}. 

From \eqref{eq:weibull-mle-bias}, we see that the maximum likelihood estimate of the Weibull shape parameter is positively biased:
\begin{align}
\operatorname{Bias}(\hat{k}_{\rm MLE})
=
\frac{18k(\pi^2-2\zeta(3))}{n\pi^4}
+
O(n^{-2})
\approx
\frac{1.38\,k}{n}
+
O(n^{-2}),
\label{eq:weibull-shape-bias}
\end{align}
with the bias increasing linearly with $k$.
The additional Wallace--Freeman term in \eqref{eq:weibull-mml-bias} counteracts this upward shift through the penalty gradient ${\bf a}(\btheta)$, which depends on both the prior and the Fisher information contribution. For example, when $\lambda=1$, the ratio of the $O(n^{-1})$ MLE bias to the corresponding Wallace--Freeman bias for the shape parameter simplifies to
\begin{align}
R(k,\lambda=1)
=
\frac{
3(k^2+1)(\pi^2-2\zeta(3))
}{
\pi^2(k^2+3)-6(k^2+1)\zeta(3)
},
\end{align}
which is greater than one for all $k>0$. 
More generally, the leading correction term in \eqref{eq:weibull-mml-bias} indicates that the Wallace--Freeman estimator typically reduces the first-order upward bias of the Weibull shape estimator.

\begin{figure}[t]
\centering
\includegraphics[width=0.8\textwidth]{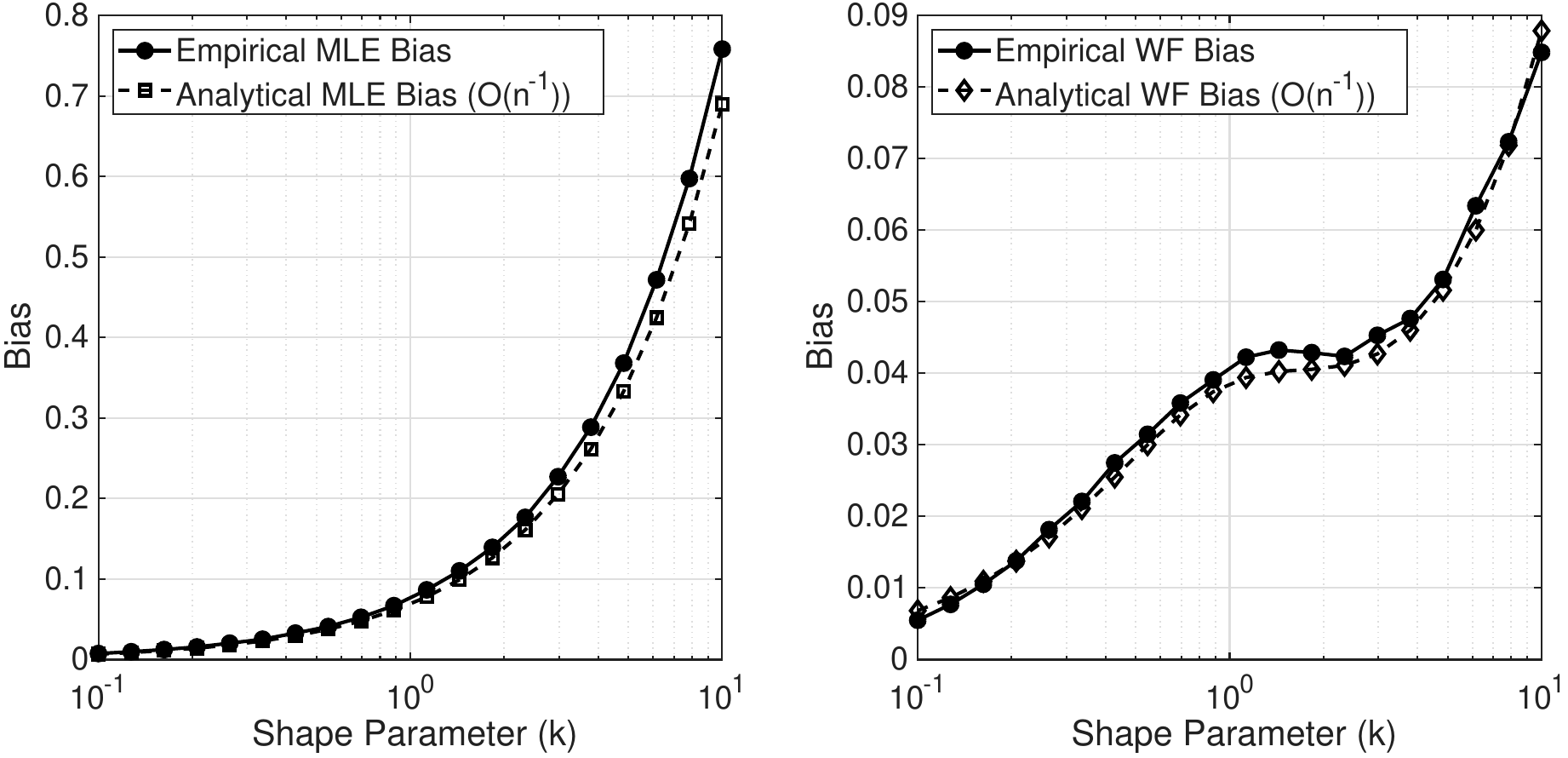}
\caption{Empirical and analytical \(O(n^{-1})\) biases for the Weibull shape parameter. The left panel compares the empirical and analytical maximum likelihood bias. The right panel compares the empirical and analytical Wallace--Freeman bias. The simulation uses \(n=20\), \(\lambda=1\), 20 logarithmically spaced values of \(k\in[0.1,10]\), and \(10^5\) Monte Carlo replications for each value of \(k\).}
\label{fig:weibull-analytical-empirical}
\end{figure}

\subsection{Simulation}
\label{sec:simulation}
We complement the analytic derivations with a brief Monte Carlo study of the first-order bias approximation for the Weibull distribution shape parameter. The simulation assesses whether the \(O(n^{-1})\) bias expressions derived in Section~\ref{sec:example} provide an accurate finite-sample description of the Wallace--Freeman and maximum likelihood estimators. 

Data were generated from the Weibull distribution with scale parameter fixed at \(\lambda=1\). The true shape parameter \(k\) was varied over 20 logarithmically spaced values in the interval \([0.1,10]\). For each value of \(k\), \(10^5\) independent samples of size \(n=20\) were generated. For each sample we computed the maximum likelihood estimator and the Wallace--Freeman estimator under the independent half-Cauchy prior used in Section~\ref{sec:example}. The Wallace--Freeman optimisation was initialised at the maximum likelihood estimate. The empirical bias was then estimated as
\[
\widehat{\operatorname{Bias}}(\hat k)
=
\frac{1}{B}\sum_{b=1}^B (\hat k^{(b)} - k),
\qquad B=10^5,
\]
where $\hat k$ denotes the Wallace--Freeman or the maximum likelihood estimate of $k$. The resulting empirical biases were compared with the first-order analytical bias formulae in \eqref{eq:weibull-mle-bias} and \eqref{eq:weibull-mml-bias}. The MATLAB code used to reproduce the simulation results and figures is available in the \href{https://github.com/EnesMakalic/Asymptotic-theory-of-the-WF-estimator}{GitHub repository}.

A comparison of the empirical and analytical \(O(n^{-1})\) biases for the maximum likelihood and Wallace--Freeman estimators of $k$ is shown in Figure~\ref{fig:weibull-analytical-empirical}. The agreement is close across the range of shape parameters considered. For the maximum likelihood estimator, the empirical bias follows the analytical curve almost exactly, with the bias increasing approximately linearly in \(k\). For the Wallace--Freeman estimator, the analytical approximation also tracks the empirical bias well, although some small discrepancies are visible for larger values of \(k\).

Figure~\ref{fig:weibull-mle-mml-bias}  compares the empirical biases of the two estimators. The maximum likelihood estimator exhibits the expected positive bias in the Weibull shape parameter, and this bias increases substantially as \(k\) grows. By contrast, the Wallace--Freeman estimator has a much smaller empirical bias across the same range. This behaviour is consistent with the analytical correction term in \eqref{eq:weibull-mml-bias}, which offsets the leading Cox--Snell bias of the maximum likelihood estimator when \(\lambda=1\). The simulation therefore supports the first-order expansion and illustrates that the Wallace--Freeman penalty can lead to a substantial reduction in shape-parameter bias in this setting.

\begin{figure}[t]
\centering
\includegraphics[width=0.6\textwidth]{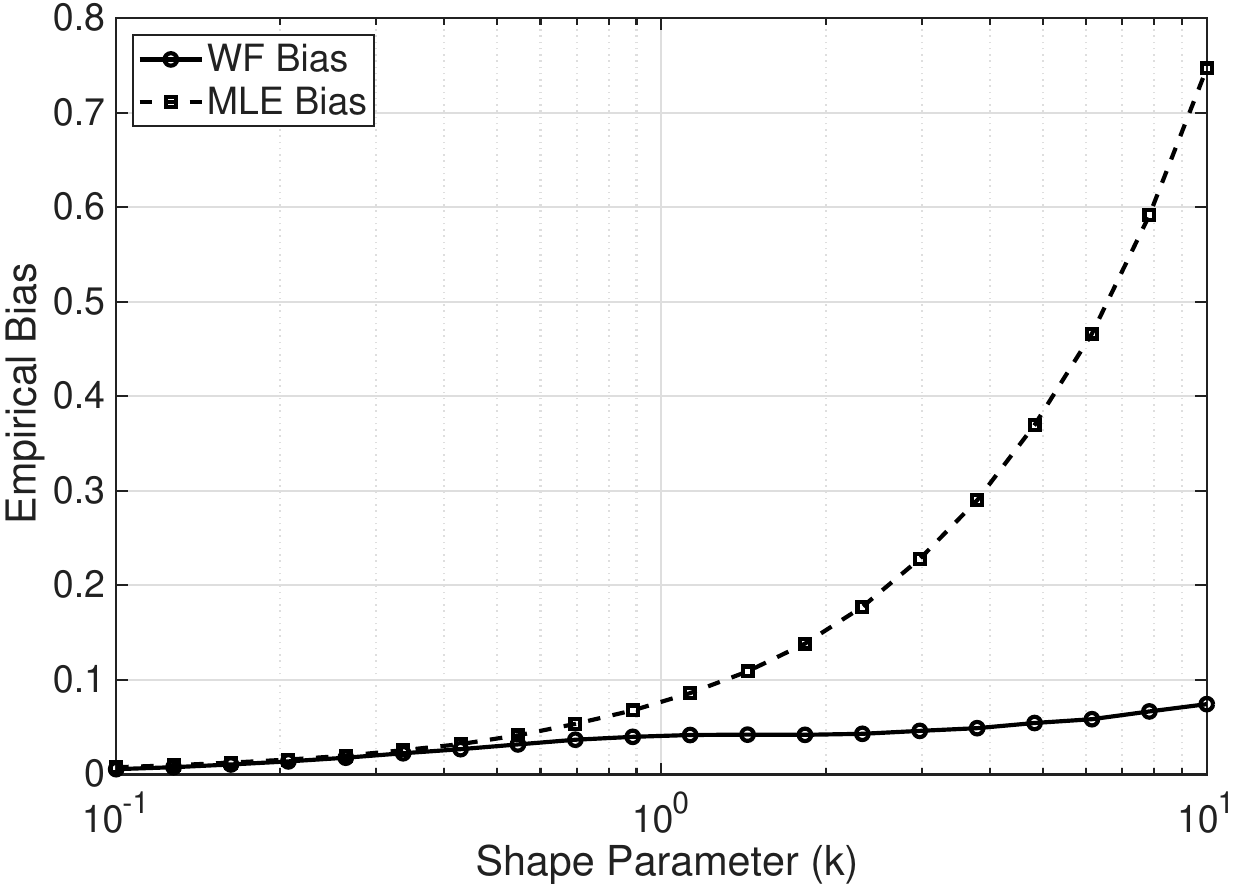}
\caption{Empirical bias of the Wallace--Freeman and maximum likelihood estimators of the Weibull shape parameter. The maximum likelihood estimator has positive bias that increases rapidly with \(k\), whereas the Wallace--Freeman estimator has substantially smaller empirical bias over the range considered. The simulation uses \(n=20\), \(\lambda=1\), and \(10^5\) Monte Carlo replications for each value of \(k\).}
\label{fig:weibull-mle-mml-bias}
\end{figure}
\section{Discussion and Future Work}
\label{sec:discussion}

We have shown that the Wallace--Freeman estimator can be analysed as a penalised likelihood estimator with penalty weight \(n^{-1}\). This representation yields the usual first-order likelihood asymptotics while also identifying the \(O(n^{-1})\) displacement from the maximum likelihood estimator. Combining this displacement with the Cox--Snell expansion gives a simple first-order bias formula for the Wallace--Freeman estimator.

The results clarify the role of the Wallace--Freeman penalty in regular parametric inference. The Fisher information term ensures invariance under smooth reparameterisation, while the prior controls the direction and magnitude of the first-order bias modification. Special choices of the prior recover important limiting cases, including agreement with the MLE to order \(n^{-1}\) under Jeffreys prior and a connection with Firth's penalised likelihood when \(\pi(\btheta)\propto |I(\btheta)|\).

The present results describe the Wallace--Freeman estimator at the \(n^{-1/2}\) and \(n^{-1}\) scales, but its behaviour beyond this first-order setting remains open. Higher-order expansions would clarify how the penalty affects terms beyond the leading bias, including possible second-order effects on variance and efficiency. Under model misspecification, the penalised M-estimation framework already allows \({\bf A}\neq{\bf B}\), suggesting an extension to sandwich asymptotics where the Fisher information penalty need not match the curvature of the limiting risk. Further work could also address nonregular settings, such as boundary parameters or singular information matrices, and models in which \(J_n(\btheta)\) grows at a nonstandard rate and the effective penalty order changes.
\section*{Acknowledgements}
Generative AI tools (Claude Opus 4.6; Microsoft Copilot GPT-5.4) were used in the preparation of this manuscript for generation and exploration of ideas as well as language and MATLAB source code refactoring. The authors reviewed and are responsible for all content, including all mathematical derivations, citations, and conclusions.
\appendix
\section{Proof of Theorem~\ref{thm:asmml-expected-simplified}}
\label{app:asmml-expected-proof}

\begin{proof}[Proof of Theorem~\ref{thm:asmml-expected-simplified}]
For $\bx\in P_{n,j}$, the code length is
\[
\Lambda_{n,j}(\bx) = -\log q_{n,j}-\log p_n(\bx\mid\btheta_{n,j}).
\]
Adding and subtracting the local continuous fitted likelihood gives
\[
% -\log q_{n,j}
% -\log p_n(\bx\mid\btheta_{n,j})
\Lambda_{n,j}(\bx) 
=
-\log p_n\{\bx\mid T_n(\bx)\}
+
\left[
-\log q_{n,j}
-\log p_n(\bx\mid\btheta_{n,j})
+
\log p_n\{\bx\mid T_n(\bx)\}
\right].
\]
Taking expectation with respect to $r_n$, we have
\[
\mathcal I_n^{\mathrm{ASMML}}
=
\mathbb E_{r_n}
\left[
-\log p_n\{\bX\mid T_n(\bX)\}
\right]
+
\sum_j q_{n,j}
\left\{
-\log q_{n,j}
+
\Delta_{n,j}
\right\}.
\]
We first expand the assertion term. By the localisation assumption,
\[
q_{n,j}
=
\Pi(W_{n,j})\exp(\alpha_{n,j}),
\qquad
\sum_j q_{n,j}|\alpha_{n,j}|\to0.
\]
Hence
\[
-\log q_{n,j}
=
-\log \Pi(W_{n,j})
-
\alpha_{n,j},
\]
and consequently
\[
\sum_j q_{n,j}(-\log q_{n,j})
=
\sum_j q_{n,j}(-\log \Pi(W_{n,j}))
+
o(1).
\]
By the assumed Fisher-normalised cell representation, we have
\[
W_{n,j}
=
\btheta_{n,j}
+
J_n(\btheta_{n,j})^{-1/2}C_{n,j}.
\]
Using the change of variables
\[
\bvartheta
=
\btheta_{n,j}
+
J_n(\btheta_{n,j})^{-1/2}\bu,
\qquad
\bu\in C_{n,j},
\]
we obtain
\[
\Pi(W_{n,j})
=
|J_n(\btheta_{n,j})|^{-1/2}
\int_{C_{n,j}}
\pi\!\left(
\btheta_{n,j}
+
J_n(\btheta_{n,j})^{-1/2}\bu
\right)
d\bu
.
\]
The sets $C_{n,j}$ have uniformly bounded diameter, while
\[
\inf_{\btheta\in K}\lambda_{\min}\{J_n(\btheta)\}\to\infty.
\]
Thus
\[
\sup_{\btheta\in K}
\left\|J_n(\btheta)^{-1/2}\right\|
\to0,
\]
and the corresponding cells shrink uniformly in ordinary parameter space. Since $\pi$ is continuous and strictly positive on the compact set $K$, it is uniformly continuous and bounded away from zero on $K$. Therefore the prior is approximately locally constant over these shrinking cells, and
\[
\Pi(W_{n,j})
=
\pi(\btheta_{n,j})
|J_n(\btheta_{n,j})|^{-1/2}
V_{n,j}
\{1+o(1)\}, \qquad V_{n,j} = {\rm Vol}(C_{n,j}),
\]
with the induced logarithmic error negligible after averaging with respect to $q_{n,j}$. Hence
\[
\sum_j q_{n,j}(-\log q_{n,j})
=
\sum_j q_{n,j}
\left[
-\log\pi(\btheta_{n,j})
+
\frac12\log|J_n(\btheta_{n,j})|
-
\log V_{n,j}
\right]
+
o(1).
\]
We next expand the second-part regret $\Delta_{n,j}$. By the assumed KL representation,
\[
\Delta_{n,j}
=
\frac{1}{\Pi(W_{n,j})}
\int_{W_{n,j}}
D_{\mathrm{KL}}\!\left(
P_{\bvartheta}^{(n)}
\Vert
P_{\btheta_{n,j}}^{(n)}
\right)
\pi(\bvartheta)\,d\bvartheta
+
\varepsilon_{n,j},
\]
where
\[
\sum_j q_{n,j}|\varepsilon_{n,j}|\to0.
\]
Therefore
\[
\sum_j q_{n,j}\Delta_{n,j}
=
\sum_j q_{n,j}
\frac{1}{\Pi(W_{n,j})}
\int_{W_{n,j}}
D_{\mathrm{KL}}\!\left(
P_{\bvartheta}^{(n)}
\Vert
P_{\btheta_{n,j}}^{(n)}
\right)
\pi(\bvartheta)\,d\bvartheta
+
o(1).
\]

For $\bvartheta\in W_{n,j}$, write
\[
\bu
=
J_n(\btheta_{n,j})^{1/2}
(\bvartheta-\btheta_{n,j}).
\]
Then $\bu\in C_{n,j}$. Since the sets $C_{n,j}$ have uniformly bounded diameter, all such $\bu$ lie in a fixed bounded subset of $\mathbb R^d$. Assumption~\ref{ass:simplified:kl} therefore gives, uniformly over the cells,
\[
D_{\mathrm{KL}}\!\left(
P_{\bvartheta}^{(n)}
\Vert
P_{\btheta_{n,j}}^{(n)}
\right)
=
\frac12\|\bu\|^2+o(1).
\]
Using again the change of variables
\[
\bvartheta
=
\btheta_{n,j}
+
J_n(\btheta_{n,j})^{-1/2}\bu,
\]
and using the fact that $\pi$ is approximately  constant over the shrinking cells, the prior-weighted cell average is asymptotically the uniform average over $C_{n,j}$. Thus
\[
\sum_j q_{n,j}\Delta_{n,j}
=
\sum_j q_{n,j}
\frac{1}{2V_{n,j}}
\int_{C_{n,j}}\|\bu\|^2\,d\bu
+
o(1).
\]

Combining the assertion and regret expansions gives
\[
\mathcal I_n^{\mathrm{ASMML}}
=
\mathbb E_{r_n}
\left[
-\log p_n\{\bX\mid T_n(\bX)\}
\right]
+
\sum_j q_{n,j}
\left[
-\log\pi(\btheta_{n,j})
+
\frac12\log|J_n(\btheta_{n,j})|
\right]
\]
\[
\qquad
+
\sum_j q_{n,j}
\left\{
-\log V_{n,j}
+
\frac{1}{2V_{n,j}}
\int_{C_{n,j}}\|\bu\|^2\,d\bu
\right\}
+
o(1).
\]
By the assumed asymptotic high-rate quantisation optimality,
\[
\sum_j q_{n,j}
\left\{
-\log V_{n,j}
+
\frac{1}{2V_{n,j}}
\int_{C_{n,j}}\|\bu\|^2\,d\bu
\right\}
=
\frac d2\log\kappa_d+\frac d2+o(1).
\]
By the prior-weighted empirical convergence assumption,
\[
\sum_j q_{n,j}
\left[
-\log\pi(\btheta_{n,j})
+
\frac12\log|J_n(\btheta_{n,j})|
\right]
=
\int_K
\left[
-\log\pi(\btheta)
+
\frac12\log|J_n(\btheta)|
\right]
\pi(\btheta)\,d\btheta
+
o(1).
\]
Since
\[
h(\pi)
=
-\int_K \pi(\btheta)\log\pi(\btheta)\,d\btheta,
\]
this becomes
\[
\sum_j q_{n,j}
\left[
-\log\pi(\btheta_{n,j})
+
\frac12\log|J_n(\btheta_{n,j})|
\right]
=
h(\pi)
+
\frac12
\int_K\pi(\btheta)\log|J_n(\btheta)|\,d\btheta
+
o(1).
\]
Therefore
\[
\mathcal I_n^{\mathrm{ASMML}}
=
\mathbb E_{r_n}
\left[
-\log p_n\{\bX\mid T_n(\bX)\}
\right]
+
h(\pi)
+
\frac12\int_K\pi(\btheta)\log|J_n(\btheta)|\,d\btheta
+
\frac d2\log\kappa_d
+
\frac d2
+
o(1).
\]

Finally, if $J_n(\btheta)=n I(\btheta)$, then
\[
\log|J_n(\btheta)|
=
\log|nI(\btheta)|
=
d\log n+\log|I(\btheta)|,
\]
completing the proof.
\end{proof}

\section{Proof of Theorem~\ref{thm:wf87:penm}}
\label{app:penm-proof}

\begin{proof}[Proof of Theorem~\ref{thm:wf87:penm}]
By Assumption (B6), the criterion $Q_n$ is continuous and either $\Theta$ is compact, or $\Theta$ is closed and $Q_n$ is coercive with probability tending to one. Hence, by the Weierstrass theorem, a global minimiser exists with probability tending to one.

For consistency, Assumptions (B1) and (B3) imply uniform convergence of the empirical risk to $Q$ on compact sets, and $Q$ has a unique minimiser at $\btheta_0$. Since $\lambda_n\to0$ and the penalty is continuous and locally bounded near $\btheta_0$, the penalised criterion has the same local limiting objective. The argmin theorem therefore gives 
\[
\hat\btheta_n\xrightarrow{p}\btheta_0.
\]

Because $\btheta_0\in\operatorname{int}(\Theta)$, consistency implies that $\hat\btheta_n$ lies in an interior neighbourhood of $\btheta_0$ with probability tending to one. On this event the stationarity equation holds:
\[
0
=
\Psi_n(\hat\btheta_n)
=
\frac1n\sum_{i=1}^n\psi(X_i,\hat\btheta_n)
+
\lambda_n\nabla_\theta\operatorname{pen}(\hat\btheta_n).
\]
A first-order Taylor expansion around $\btheta_0$ gives
\[
0
=
\Psi_n(\btheta_0)
+
\nabla_\theta\Psi_n(\tilde\btheta_n)
(\hat\btheta_n-\btheta_0),
\]
where $\tilde\btheta_n$ lies between $\hat\btheta_n$ and $\btheta_0$.
By Assumptions (B5), (B8), and consistency,
\[
\nabla_\theta\Psi_n(\tilde\btheta_n)
=
\frac1n\sum_{i=1}^n\nabla_\theta\psi(X_i,\tilde\btheta_n)
+
\lambda_n\nabla_\theta^2\operatorname{pen}(\tilde\btheta_n)
\xrightarrow{p}
{\bf A}.
\]
Hence
\[
\hat\btheta_n-\btheta_0
=
-{\bf A}^{-1}\Psi_n(\btheta_0)
+
o_p(\|\Psi_n(\btheta_0)\|).
\]
Since
\[
\Psi_n(\btheta_0)
=
\frac1n\sum_{i=1}^n\psi(X_i,\btheta_0)
+
\lambda_n\nabla_\theta\operatorname{pen}(\btheta_0),
\]
we obtain
\[
\hat\btheta_n-\btheta_0
=
-{\bf A}^{-1}
\left[
\frac1n\sum_{i=1}^n\psi(X_i,\btheta_0)
+
\lambda_n\nabla_\theta\operatorname{pen}(\btheta_0)
\right]
+
o_p(n^{-1/2}+\lambda_n).
\]
When \(\lambda_n=n^{-1}\), the deterministic penalty term is of smaller order than \(n^{-1/2}\), so the expansion simplifies to
\[
\hat\btheta_n-\btheta_0
=
-\mathbf{A}^{-1}
\frac1n\sum_{i=1}^n\psi(X_i,\btheta_0)
+
o_p(n^{-1/2}),
\]
which proves the stated rate and linear expansion.

Finally, Assumption (B7) and the condition
$\sqrt n\,\lambda_n\to0$ imply
\[
\sqrt n(\hat\btheta_n-\btheta_0)
=
-{\bf A}^{-1}
\frac1{\sqrt n}\sum_{i=1}^n\psi(X_i,\btheta_0)
+
o_p(1),
\]
and hence
\[
\sqrt n(\hat\btheta_n-\btheta_0)
\xrightarrow{d}
N(0,{\bf A}^{-1}{\bf B}{\bf A}^{-1}).
\]
In the correctly specified likelihood case,
${\bf A}={\bf B}=I(\btheta_0)$, giving the usual inverse Fisher
information covariance.
\end{proof}

\section{Proof of Theorem~\ref{thm:wf87:bias}}
\label{app:bias-proof}

\begin{proof}[Proof of Theorem~\ref{thm:wf87:bias}]
The Wallace--Freeman estimator satisfies
\[
\nabla\ell(\hat\btheta_{\rm WF})
+
{\bf a}(\hat\btheta_{\rm WF})
=
0,
\qquad
{\bf a}(\btheta)
=
\nabla\log\pi(\btheta)
-
\frac12\nabla\log |I(\btheta)|.
\]
Let
\[
\Delta
=
\hat\btheta_{\rm WF}-\hat\btheta_{\rm MLE}.
\]
Since
\[
\nabla\ell(\hat\btheta_{\rm MLE})=0,
\]
a Taylor expansion of the score about $\hat\btheta_{\rm MLE}$ gives
\[
\nabla\ell(\hat\btheta_{\rm WF})
=
\nabla^2\ell(\hat\btheta_{\rm MLE})\Delta
+
R_n,
\]
where the remainder is negligible at the scale required by Assumption
(C5). Thus
\[
\nabla^2\ell(\hat\btheta_{\rm MLE})\Delta
+
{\bf a}(\hat\btheta_{\rm WF})
+
R_n
=
0.
\]
The observed information satisfies
\[
-\nabla^2\ell(\hat\btheta_{\rm MLE})
=
K(\btheta_0)+O_p(n^{1/2}),
\]
with $K(\btheta_0)=nI(\btheta_0)$. Since $K(\btheta_0)^{-1}=O(n^{-1})$ and ${\bf a}(\btheta_0)=O(1)$, we have
\[
\Delta=O_p(n^{-1}).
\]
By differentiability of ${\bf a}$ and root-$n$ consistency of
$\hat\btheta_{\rm MLE}$,
\[
{\bf a}(\hat\btheta_{\rm WF})
=
{\bf a}(\btheta_0)+O_p(n^{-1/2}).
\]
Premultiplication by the inverse observed information makes this
$O_p(n^{-1/2})$ perturbation contribute only $O_p(n^{-3/2})$, which is $o_p(n^{-1})$. Therefore
\[
\Delta
=
\left[-\nabla^2\ell(\hat\btheta_{\rm MLE})\right]^{-1}
{\bf a}(\btheta_0)
+
o_p(n^{-1}).
\]
Moreover,
\[
\left[-\nabla^2\ell(\hat\btheta_{\rm MLE})\right]^{-1}
=
K(\btheta_0)^{-1}
+
o_p(n^{-1}),
\]
because
\[
-\nabla^2\ell(\hat\btheta_{\rm MLE})
=
K(\btheta_0)+O_p(n^{1/2})
\]
and $K(\btheta_0)^{-1}=O(n^{-1})$. Hence
\[
\hat\btheta_{\rm WF}-\hat\btheta_{\rm MLE}
=
K(\btheta_0)^{-1}{\bf a}(\btheta_0)
+
o_p(n^{-1})
=
\frac1n I(\btheta_0)^{-1}{\bf a}(\btheta_0)
+
o_p(n^{-1}),
\]
which proves \eqref{eqn:mml:mle}.

For the bias expansion, decompose
\[
\mathbb E_{\btheta_0}(\hat\btheta_{\rm WF}-\btheta_0)
=
\mathbb E_{\btheta_0}(\hat\btheta_{\rm MLE}-\btheta_0)
+
\mathbb E_{\btheta_0}(\hat\btheta_{\rm WF}-\hat\btheta_{\rm MLE}).
\]
By the preceding expansion and the uniform integrability condition in
Assumption (C5),
\[
\mathbb E_{\btheta_0}(\hat\btheta_{\rm WF}-\hat\btheta_{\rm MLE})
=
K(\btheta_0)^{-1}{\bf a}(\btheta_0)
+
o(n^{-1}).
\]
Combining this with the Cox--Snell expansion
\[
\operatorname{Bias}(\hat\btheta_{\rm MLE})
=
K(\btheta_0)^{-1}
A(\btheta_0)
\operatorname{vec}\{K(\btheta_0)^{-1}\}
+
O(n^{-2})
\]
gives
\[
\operatorname{Bias}(\hat\btheta_{\rm WF})
=
K(\btheta_0)^{-1}
A(\btheta_0)
\operatorname{vec}\{K(\btheta_0)^{-1}\}
+
K(\btheta_0)^{-1}{\bf a}(\btheta_0)
+
o(n^{-1}).
\]
This proves the theorem.
\end{proof}

\bibliographystyle{unsrt}
\bibliography{bibliography}  

@Book{ConwaySloane98,
  Title                    = {Sphere Packing, Lattices and Groups},
  Author                   = {J.~H. Conway and N.~J.~A. Sloane},
  Publisher                = {Springer-Verlag},
  Year                     = {1998},
  Edition                  = {Third},
  Month                    = {December},
  Pages                    = {703}
}

@Article{FarrWallace02,
  author    = {Graham E. Farr and Chris S. Wallace},
  journal   = {Computer Journal},
  title     = {The complexity of Strict Minimum Message Length inference},
  year      = {2002},
  number    = {3},
  pages     = {285--292},
  volume    = {45},
  doi       = {10.1093/comjnl/45.3.285},
}

@Article{Firth93,
  author    = {David Firth},
  title     = {Bias Reduction of Maximum Likelihood Estimates},
  year      = {1993},
  volume    = {80},
  number    = {1},
  pages     = {27--38},
  journal   = {Biometrika},
}

@Article{Gelman06,
  Title                    = {Prior distributions for variance parameters in hierarchical models},
  Author                   = {Andrew Gelman},
  Journal                  = {Bayesian Analysis},
  Year                     = {2006},
  Number                   = {3},
  Pages                    = {515--533},
  Volume                   = {1}
}

@Article{KullbackLeibler51,
  author    = {S. Kullback and R.~A. Leibler},
  journal   = {The Annals of Mathematical Statistics},
  title     = {On information and sufficiency},
  year      = {1951},
  month     = {March},
  number    = {1},
  pages     = {79--86},
  volume    = {22},
  doi       = {10.1214/aoms/1177729694},
}

@Article{Schwarz78,
  author    = {Gideon Schwarz},
  title     = {Estimating the dimension of a model},
  year      = {1978},
  volume    = {6},
  number    = {2},
  pages     = {461--464},
  journal   = {The Annals of Statistics},
}

@Article{WallaceBoulton75,
  author    = {Chris S. Wallace and David~M. Boulton},
  journal   = {Classification Society Bulletin},
  title     = {An invariant {B}ayes method for point estimation},
  year      = {1975},
  number    = {3},
  pages     = {11--34},
  volume    = {3},
}

@Book{Wallace05,
  author    = {Chris~S. Wallace},
  publisher = {Springer},
  title     = {Statistical and inductive inference by minimum message length},
  year      = {2005},
  edition   = {First},
  series    = {Information Science and Statistics},
  doi       = {10.1007/0-387-27656-4},
}

@Article{WallaceBoulton68,
  author    = {Chris S. Wallace and David~M. Boulton},
  journal   = {Computer Journal},
  title     = {An information measure for classification},
  year      = {1968},
  month     = {August},
  number    = {2},
  pages     = {185--194},
  volume    = {11},
  doi       = {10.1093/comjnl/11.2.185},
}

@Article{WallaceDowe99a,
  author    = {Chris S. Wallace and David L. Dowe},
  journal   = {Computer Journal},
  title     = {Refinements of {MDL} and {MML} Coding},
  year      = {1999},
  number    = {4},
  pages     = {330--337},
  volume    = {42},
  doi       = {10.1093/comjnl/42.4.330},
}

@Article{WallaceDowe99c,
  author    = {Chris S. Wallace and David L. Dowe},
  journal   = {Computer Journal},
  title     = {Minimum Message Length and {K}olmogorov Complexity},
  year      = {1999},
  number    = {4},
  pages     = {270--283},
  volume    = {42},
  doi       = {10.1093/comjnl/42.4.270},
}

@Article{WallaceFreeman87,
  author    = {Chris~S. Wallace and Peter~R. Freeman},
  journal   = {Journal of the Royal Statistical Society (Series B)},
  title     = {Estimation and inference by compact coding},
  year      = {1987},
  number    = {3},
  pages     = {240--252},
  volume    = {49},
  doi       = {10.1111/j.2517-6161.1987.tb01695.x},
}

@Article{PolsonScott12,
  author    = {Nicholas G. Polson and James G. Scott},
  journal   = {Bayesian Analysis},
  title     = {On the Half-{C}auchy Prior for a Global Scale Parameter},
  year      = {2012},
  number    = {4},
  volume    = {7},
  doi       = {10.1214/12-BA730},
  publisher = {Institute of Mathematical Statistics},
}

@Article{CoxSnell68,
  author    = {D. R. Cox and E. J. Snell},
  journal   = {Journal of the Royal Statistical Society: Series B (Methodological)},
  title     = {A General Definition of Residuals},
  year      = {1968},
  number    = {2},
  pages     = {248--265},
  volume    = {30},
  doi       = {10.1111/j.2517-6161.1968.tb00724.x},
  publisher = {Wiley},
}

@Article{CordeiroKlein94,
  author    = {Gauss M. Cordeiro and Ruben Klein},
  journal   = {Statistics {\&} Probability Letters},
  title     = {Bias correction in {ARMA} models},
  year      = {1994},
  number    = {3},
  pages     = {169--176},
  volume    = {19},
  doi       = {10.1016/0167-7152(94)90100-7},
  publisher = {Elsevier {BV}},
}

@Article{TanakaEtAl17,
  author       = {Hidekazu Tanaka and Nabendu Pal and Wooi K. Lim},
  date         = {2017-04},
  journal      = {Journal of Statistical Theory and Practice},
  title        = {On improved estimation under {W}eibull model},
  year         = {2017},
  doi          = {10.1080/15598608.2017.1305921},
  number       = {1},
  pages        = {48--65},
  volume       = {12},
  journaltitle = {Journal of Statistical Theory and Practice},
  publisher    = {Springer Science and Business Media {LLC}},
}

@Misc{Dowty13,
  author       = {Dowty, James G.},
  howpublished = {arXiv:1302.0581},
  title        = {{SMML} estimators for exponential families with continuous sufficient statistics},
  year         = {2013},
  copyright    = {arXiv.org perpetual, non-exclusive license},
  doi          = {10.48550/arXiv.1302.0581},
  publisher    = {arXiv},
}

@Article{MakalicSchmidt26b,
  author    = {Makalic, Enes and Schmidt, Daniel F.},
  journal   = {Entropy},
  title     = {Information Geometry and Asymptotic Theory for {SMML} Estimators},
  year      = {2026},
  issn      = {1099-4300},
  month     = June,
  number    = {6},
  pages     = {713},
  volume    = {28},
  doi       = {10.3390/e28060713},
  publisher = {MDPI AG},
}

\end{document}